\numberwithin{equation}{section}
\theoremstyle{definition}
\newtheorem{definition}{Definition}[section]
\theoremstyle{remark}
\newtheorem{remark}[definition]{Remark}
 \theoremstyle{plain}
\newtheorem{theorem}[definition]{Theorem}
\newtheorem{result}[definition]{Result}
\newtheorem{lemma}[definition]{Lemma}
\newtheorem{proposition}[definition]{Proposition}
\newtheorem{obs}[definition]{Observation}
\newcommand{\lam}{\lambda}
\newcommand{\zt}{\zeta}
\newcommand{\OM}{\Omega}
\newcommand{\D}{\mathbb{D}}
\newcommand{\I}{\mathbb{I}}
\newcommand{\hol}{\mathcal{O}}
\newcommand\hyper[2]{\left|\frac{{#1}-{#2}}{1-\overline{{#2}}{#1}}\right|}
\newcommand{\mobi}{\mathcal{M}_{\mathbb{D}}}
\newcommand{\Sn}{S_n(\Omega)}
\newcommand{\Z}{\mathbb{Z}}
\newcommand{\nat}{\mathbb{N}}
\newcommand{\bcdot}{\boldsymbol{\cdot}}
\newcommand\intgR[2]{[{#1}\,.\,.\,{#2}]}
\newcommand\minpo[1]{\boldsymbol{{\sf M}}_{{#1}}}
\newcommand\intf[4]{\genfrac{#1}{#2}{0.5pt}{0}{#3}{#4}}
\newcommand{\lrarw}{\longrightarrow}
\newcommand\sub[2]{\underset{#1}{#2}}
\newcommand{\C}{\mathbb{C}}
\newcommand*{\rom}[1]{\expandafter\@slowromancap\romannumeral #1@}
\begin{document}

\title[Non-homogeneous Matricial domains]{Certain non-homogeneous matricial domains and Pick--Nevanlinna interpolation problem}

\author{Vikramjeet Singh Chandel}
\address{Department of Mathematics, Harish-Chandra Research Institute, Prayagraj (Allahabad), India}
\email{vikramjeetchandel@hri.res.in}

\keywords{symmetrized product, invariant pseudo-distances, holomorphic automorphisms and proper maps, spectral unit ball, holomorphic functional calculus, Pick--Nevanlinna interpolation problem}
\subjclass[2010]{Primary:  32H35, 30E05, 47A56; Secondary: 32F45, 47A60}

\begin{abstract}
In this article, we consider certain matricial domains that are naturally associated to
a given domain of the complex plane. A particular example of such domains is the {\em spectral unit ball}. 
We present several results for these matricial domains. Our
first result shows\,--\,generalizing a result of Ransford-White for the spectral unit ball\,--\,that
the holomorphic automorphism group of these matricial domains does not act transitively. We also consider $2$-point and $3$-point
Pick--Nevanlinna interpolation problem from the unit disc to these matricial domains.
We present results providing necessary conditions for the existence of a holomorphic {\em interpolant} for these problems.
In particular, we shall observe that these results are generalizations of the results provided by Bharali and Chandel related to these problems.

\end{abstract}
\maketitle

\section{Introduction and statement of results}\label{S:intro}
Let $\D$ denote the open unit disc in the complex plane $\C$ centered at $0$. Given a domain
$\Omega\subseteq\C$ and $n\in\mathbb{N}, n\geq 2$, we define:
\[
 \Sn:=\{A\in M_n(\C): \sigma(A)\subset\Omega\}.
\]
Here, $M_n(\C)$ denotes the set of all $n\times n$ complex matrices, and $\sigma(A)$ of a matrix $A\in M_n(\C)$ denotes the set of eigenvalues of $A$.
It is not difficult to check that $S_n(\Omega)$ is an open, connected subset of $M_n(\C)$. Hence,
by definition, considered as a subset of $\C^{n^2}$, it is a domain.
In the case $\Omega=\D$, the set $S_n(\Omega)$ is known in the literature as the spectral unit ball and is
denoted by $\Omega_n$.
\smallskip

In the first part of this article, we shall establish that the holomorphic automorphism group
of $S_n(\Omega)$, denoted by $Aut(\Sn)$, does not act transitively on $\Sn$ for ceratain
planar domains $\Omega$ including all bounded domains.
In particular, this result generalizes an analogous result due to 
Ransford-White (see \cite{tjr&mcw:hsmsub91}) about the spectral unit ball $\Omega_n,\,n\geq 2$.
To state this result precisely, we need to introduce a few more objects associated naturally to $S_n(\Omega)$.
\smallskip

First, we consider the {\em symmetrization map} $\pi_n:\C^n\lrarw\C^n$ defined by:
\[
\pi_n(z):=\big(\pi_{n,\,1}(z),\dots,\pi_{n,\,j}(z),\dots,\pi_{n,\,n}(z)\big),
\]
where $\pi_{n,\,j}(z)$ is the $j$-th elementary symmetric polynomial. In other words, 
if we write $\pi_{n,\,j}(z_1,\dots,z_n)=\mathscr{S}_j(z_1,\dots,z_n)$, then $\mathscr{S}_j$ satisfy:
\[
\prod_{j=1}^n(t-z_j)=t^n+\sum_{j=1}^n (-1)^j\mathscr{S}_j(z_1,\dots,z_n)\,t^{n-j}, \ \ \ t\in\C.
\]
The symmetrization map $\pi_n$ is a proper holomorphic map from $\C^n$ to $\C^n$.
Consider the {\em $n$-th symmetrized product of $\Omega$}, defined by:
\[
\Sigma^{n}(\Omega):=\pi_n(\Omega^n).
\]
It is easy to see that $\Sigma^{n}(\Omega)$ is a domain in $\C^n$.
\smallskip

We need to introduce a few more objects before we state our first result. 
Given a holomorphic self-map $\phi$ of $\Omega$, there is a holomorphic self-map that $\phi$ induces on $\Sigma^n(\Omega)$; namely: 
$\Sigma^n\phi : \Sigma^n(\Omega)\lrarw\Sigma^n(\Omega)$ defined by 
\[
 \Sigma^n\phi\,\big(\pi_n(z_1,\dots,
  z_n)\big):=\pi_n(\phi(z_1),\dots,\phi(z_n)) \ \ \ \forall \  (z_1,\dots,z_n)\in\Omega^n,
\]
where $\pi_n$ is the symmetrization map as before. Given $n\in\nat,\, n\geq 2$ and a family $\mathscr{A}_n\subset\hol(\Sigma^n(\OM),\,\Sigma^n(\OM))$ of holomorphic self-maps of $\Sigma^n(\OM)$, consider the following:
\begin{itemize}
 \item[$(\sf{P})$] for every $\Phi\in\mathscr{A}_n$, there exists a $\phi\in\hol(\OM,\,\OM)$ such that $\Phi\equiv\Sigma^n\phi$.
\end{itemize}
Observe that if $\mathscr{B}_n\subset\mathscr{A}_n\subset\hol(\Sigma^n(\OM),\,\Sigma^n(\OM))$ then it is obvious that if $\mathscr{A}_n$ satisfies the
property $(\sf{P})$ above then so does $\mathscr{B}_n$. We are now in a position to state our first main theorem as alluded to in the second paragraph above:

\begin{theorem}\label{T:holautmatdom}
Let $\Omega\subset\C$ be a domain and let $n\in\nat$, $n\geq 2$, be such that we have $\#(\C\setminus\Omega)\geq~2n$. Suppose the
holomorphic automorphism group of the $n$-th symmetrized product of 
$\Omega$, $Aut(\Sigma^n(\OM))$, has the property $(\sf{P})$ above. Then for every $\Psi\in Aut(\Sn)$ there exists a holomorphic automorphism
$\psi$ of $\Omega$ such that 
\[
 \sigma(\Psi(A))=\psi(\sigma(A)) \ \ \ \forall A\in\Sn.
\]
In particular, $Aut(\Sn)$ does not act transitively on $\Sn$.
\end{theorem}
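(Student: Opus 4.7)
The plan is to exploit the natural holomorphic surjection $p:\Sn\to\Sigma^n(\Omega)$ sending $A$ to $\pi_n(\lambda_1,\dots,\lambda_n)$, where $(\lambda_1,\dots,\lambda_n)$ is the eigenvalue list of $A$ counted with algebraic multiplicity; so the fibers of $p$ are exactly the isospectral classes. I will show that every $\Psi\in Aut(\Sn)$ descends through $p$ to an automorphism $\Phi\in Aut(\Sigma^n(\Omega))$, i.e.\ $p\circ\Psi=\Phi\circ p$. Property $(\sf{P})$ then yields $\phi\in\hol(\Omega,\Omega)$ with $\Phi=\Sigma^n\phi$, and a short symmetrization argument forces $\phi\in Aut(\Omega)$; translating $p\circ\Psi=\Sigma^n\phi\circ p$ back to matrices gives exactly $\sigma(\Psi(A))=\phi(\sigma(A))$.

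The main obstacle is the descent: proving $p(\Psi(A))=p(\Psi(B))$ whenever $p(A)=p(B)$. The key geometric input is that $\Sigma^n(\Omega)$ is Kobayashi (hence Brody) hyperbolic. Indeed, $\#(\C\setminus\Omega)\geq 2n\geq 4$ forces $\Omega$ to be hyperbolic by the Little Picard theorem, and the finite proper surjection $\pi_n:\Omega^n\to\Sigma^n(\Omega)$ (a quotient by $S_n$) transports hyperbolicity to the quotient; in particular, every holomorphic map $\C\to\Sigma^n(\Omega)$ is constant. Now for any $A\in\Sn$ and $N\in M_n(\C)$, the curve $\zeta\mapsto e^{\zeta N}Ae^{-\zeta N}$ is a holomorphic map $\C\to\Sn$ along which $p$ is constant (conjugation preserves the characteristic polynomial), so $p\circ\Psi$ applied to it is a constant holomorphic map $\C\to\Sigma^n(\Omega)$. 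Since every element of $GL_n(\C)$ is an exponential, $p\circ\Psi$ is constant on each $GL_n(\C)$-conjugacy orbit in $\Sn$. For $A$ with pairwise distinct eigenvalues the fiber $p^{-1}(p(A))$ is a single conjugacy orbit, so $p\circ\Psi$ is constant on every fiber of $p$ over the open dense generic locus $G\subset\Sigma^n(\Omega)$ of points $\pi_n(z_1,\dots,z_n)$ with pairwise distinct $z_j$. Extending to non-generic fibers is by continuity: given $A,B\in p^{-1}(s_0)$ with $s_0\notin G$, perturb both to matrices $A_k,B_k$ with $p(A_k)=p(B_k)\in G$ (e.g.\ through companion-matrix constructions) and $A_k\to A$, $B_k\to B$; the generic descent gives $p(\Psi(A_k))=p(\Psi(B_k))$, and continuity of $p\circ\Psi$ yields $p(\Psi(A))=p(\Psi(B))$.

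The descent defines $\Phi:\Sigma^n(\Omega)\to\Sigma^n(\Omega)$ by $\Phi(p(A))=p(\Psi(A))$; holomorphicity on $G$ follows from the existence of local holomorphic sections of the submersion $p|_{p^{-1}(G)}$, and extends across the analytic hypersurface $\Sigma^n(\Omega)\setminus G$ by Riemann's removable-singularities theorem once one observes that $\Phi$ is locally bounded (from the continuous extension just described). Running the same construction for $\Psi^{-1}$ yields $\Phi'$ with $\Phi'\circ\Phi\circ p=p\circ\Psi^{-1}\circ\Psi=p$, and surjectivity of $p$ gives $\Phi'\circ\Phi=\mathrm{id}$; symmetrically $\Phi\circ\Phi'=\mathrm{id}$, so $\Phi\in Aut(\Sigma^n(\Omega))$.

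Finally, $(\sf{P})$ applied to $\Phi$ and $\Phi^{-1}$ furnishes $\phi,\phi'\in\hol(\Omega,\Omega)$ with $\Sigma^n(\phi\circ\phi')=\mathrm{id}_{\Sigma^n(\Omega)}$. Evaluating at $\pi_n(z,w,\dots,w)$ with $z\neq w$ forces the multiset identity $\{\phi(\phi'(z)),\phi(\phi'(w)),\dots,\phi(\phi'(w))\}=\{z,w,\dots,w\}$; a standard connectedness argument on the distinct-tuple locus of $\Omega^n$ rules out non-identity permutations and yields $\phi\circ\phi'=\mathrm{id}_\Omega$, so $\phi\in Aut(\Omega)$. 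The desired spectral identity $\sigma(\Psi(A))=\phi(\sigma(A))$ is then just $p\circ\Psi=\Sigma^n\phi\circ p$ read off on spectra. The non-transitivity assertion follows immediately: since $\phi$ is a bijection of $\Omega$ we have $|\phi(\sigma(A))|=|\sigma(A)|$, so a scalar matrix $\lambda I\in\Sn$ (singleton spectrum) cannot lie in the same $Aut(\Sn)$-orbit as a matrix with $n$ distinct eigenvalues.
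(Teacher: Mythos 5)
Your overall architecture is the same as the paper's: descend $\Psi$ through the eigenvalue map $p=\chi:\Sn\to\Sigma^n(\Omega)$ to an automorphism of $\Sigma^n(\Omega)$, using the absence of nonconstant entire curves in $\Sigma^n(\Omega)$ to prove well-definedness on fibers, then invoke property $(\sf{P})$. However, there is a genuine gap at the single most important geometric input: your justification that $\Sigma^n(\Omega)$ is Kobayashi (Brody) hyperbolic is wrong. You argue that $\#(\C\setminus\Omega)\geq 2n\geq 4$ makes $\Omega$ hyperbolic by Little Picard and that the finite proper surjection $\pi_n:\Omega^n\to\Sigma^n(\Omega)$ ``transports hyperbolicity to the quotient.'' Finite branched quotients do not preserve hyperbolicity: an entire curve in the quotient need not lift to $\Omega^n$, only to some finite branched cover of $\C$, which can itself be hyperbolic. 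A concrete counterexample to your claim: $\Omega=\C\setminus\{0,1\}$ is hyperbolic, but $\Sigma^2(\Omega)=\C^2\setminus(\{s_2=0\}\cup\{s_2=s_1-1\})$ is affinely equivalent to $\C^*\times\C^*$, which contains nonconstant entire curves. This also explains why the hypothesis is $\#(\C\setminus\Omega)\geq 2n$ rather than $\geq 2$: your argument never uses the full count, and if it were correct it would prove a false strengthening of the theorem. The correct statement (hyperbolicity of $\Sigma^n(\Omega)$ when $\#(\C\setminus\Omega)\geq 2n$) is a nontrivial theorem of Zwonek \cite[Theorem~16]{wz:ftpspcm20}, which is exactly what the paper cites at this point; you need to either cite it or supply a genuinely different proof (e.g.\ via the hyperplane-complement picture when $\C\setminus\Omega$ is finite, plus a monotonicity argument).

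Granting that input, the rest of your argument is essentially sound, though it takes a slightly more laborious route than the paper. For the descent you use conjugation orbits $\zeta\mapsto e^{\zeta N}Ae^{-\zeta N}$, which only identifies $p\circ\Psi$ on single conjugacy classes, forcing you to treat generic fibers first and then handle degenerate fibers by a perturbation-and-continuity argument (your parenthetical ``companion-matrix constructions'' does not produce $A_k\to A$; you need to perturb the diagonal of a Schur/triangular form conjugated back by the same $P$). The paper instead connects an arbitrary $A$ directly to a diagonal matrix $D$ with $\chi(D)=\chi(A)$ by the single entire curve $\zeta\mapsto\exp(-C\zeta)(D+\zeta U)\exp(C\zeta)$, which kills all fibers at once and makes the continuity step unnecessary. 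Likewise, for holomorphy of the descended map the paper uses the global companion-matrix section $\tau$ of $\chi$, writing $G=\chi\circ\Psi\circ\tau$ everywhere, which avoids your detour through local sections over the generic locus and Riemann removable singularities. On the other hand, your derivation that $\phi\in Aut(\Omega)$ from $\Sigma^n(\phi\circ\phi')=\mathrm{id}$ is spelled out more carefully than in the paper, which only asserts this step.
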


\noindent We shall present a proof of the above theorem in Section~\ref{S:plms&proof}. We first record the following important remark regarding Theorem~\ref{T:holautmatdom}.

\begin{remark}\label{Rm:exholautmatdom}
Let $\Omega\subset\C$ be a bounded domain, then the cardinality condition in the above theorem is trivially satisfied.
It is a nontrivial result of Chakrabarty--Gorai \cite[Corollary~1.3]{dc&sg:ft&hmsppd15}\,---\,who
generalized the analogous result of Edigarian--Zwonek \cite[Theorem~1]{ae&wz:gsp05}
for $\Sigma^n(\D)$\,---\,that the family of {\bf proper} holomorphic self-maps of $\Sigma^n(\Omega)$ satisfies the property $(\sf{P})$ above.
Since the later family contains $Aut(\Sigma^n(\Omega))$, we see that for every bounded domain $\Omega$, the conclusion of the above theorem holds true.
\smallskip

The essence of the cardinality condition that features in the statement of the above theorem is that it is the condition that guarantees
the Kobayashi hyperbolicity of the domains $\Sigma^n(\Omega)$ (see Section~\ref{S:plms&proof} for the definition of
Kobayashi hyperbolicity). Infact, as it turns out not only the cardinalty condition implies the Kobayashi 
hyperbolicity of these domains but they also become Kobayashi complete; see  \cite[Theorem~16]{wz:ftpspcm20}. At this point, we do not know if the 
cardinality condition itself implies that $Aut(\Sigma^n(\Omega))$ satisfies the condition $({\sf P})$ above.
\end{remark}

One could also ask what happens if the map $\Psi$ in the statement of Theorem~\ref{T:holautmatdom}
is a proper holomorphic map instead of a holomorphic automorphism? We address this question too here,
and we show that under certain restrictions on the domain $\Omega$; namely: $\Omega$ is a hyperconvex domain for 
which the family of proper holomorphic self-maps of $\Sigma^n(\Omega)$ satisfies the condition $(\sf{P})$ above then an analogous
result similar to the conclusion of Theorem~\ref{T:holautmatdom} holds true. 
\smallskip

Before we state this result, we recall that a domain $D\subset\C^n$ is called {\em hyperconvex} if there exists a 
negative plurisubharmonic exhaustion function on $D$.
We shall see in Section~\ref{S:plms&proof} that if a domain $\Omega\subset\C$ is hyperconvex then the cardinality of $\C\setminus\Omega$ cannot be
finite. In particular, hyperconvex domains satisfy the condition on the cardinality of  $\C\setminus\Omega$ as in the statement of Theorem~\ref{T:holautmatdom}.
\smallskip

Now we state our second result related to the proper holomorphic self-maps of $\Sn$.

\begin{theorem}\label{T:holpropmatdom}
Let $\Omega\subset\C$ be a hyperconvex domain and let $n\in\nat$, $n\geq 2$, be given.
Suppose the family of proper holomorphic self-maps of $n$-th symmetrized product of 
$\Omega$, $\Sigma^n(\Omega)$, satisfies the condition $\sf(P)$ above. 
Then for every proper holomorphic map $\Psi : \Sn\lrarw\Sn$, there exists a
proper holomorphic self-map $\psi$ of $\Omega$ such that 
\begin{equation}\label{E:specprop}
\sigma(\Psi(A))=\psi(\sigma(A)) \ \ \ \forall A\in\Sn.
\end{equation}
\end{theorem}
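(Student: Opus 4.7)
The plan is to reduce the statement to the analogous property for $\Sigma^n(\OM)$ by descending $\Psi$ through the spectral map
\[
\sigma_n:\Sn\lrarw\Sigma^n(\OM),\qquad \sigma_n(A):=\pi_n\bigl(\lambda_1(A),\dots,\lambda_n(A)\bigr),
\]
which records the eigenvalues of $A$ (with multiplicity) via the elementary symmetric functions. I will construct a proper holomorphic $\widetilde{\Psi}:\Sigma^n(\OM)\to\Sigma^n(\OM)$ satisfying $\widetilde{\Psi}\circ\sigma_n=\sigma_n\circ\Psi$, apply hypothesis $({\sf P})$ to obtain $\psi\in\hol(\OM,\OM)$ with $\widetilde{\Psi}\equiv\Sigma^n\psi$, read off \eqref{E:specprop} directly, and finally verify that $\psi$ itself is proper by testing $\Psi$ on diagonal matrices.

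The decisive step is showing that $\sigma_n\circ\Psi$ is constant on every fiber of $\sigma_n$. Hyperconvexity of $\OM$ forces $\#(\C\setminus\OM)=\infty$ (as noted before the theorem), so by Remark~\ref{Rm:exholautmatdom} the target $\Sigma^n(\OM)$ is Kobayashi hyperbolic. Fix $A\in\Sn$ with $n$ pairwise distinct eigenvalues, so that $\sigma_n^{-1}(\sigma_n(A))$ is the single $GL_n(\C)$-orbit $\{gAg^{-1}\}$, and consider
\[
\Phi_A:GL_n(\C)\lrarw\Sigma^n(\OM),\qquad \Phi_A(g):=\sigma_n\bigl(\Psi(gAg^{-1})\bigr).
\]
For any $h\in GL_n(\C)$ and $i\neq j$, the affine line $t\mapsto h(I+tE_{ij})$ stays in $GL_n(\C)$ since $\det(I+tE_{ij})=1$, so $\Phi_A$ restricts on it to a holomorphic map $\C\to\Sigma^n(\OM)$, necessarily constant by Kobayashi hyperbolicity. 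As the elementary unipotents $I+tE_{ij}$ generate $SL_n(\C)$, $\Phi_A$ is right-invariant under $SL_n(\C)$ and descends to a holomorphic $\widetilde{\Phi}_A:\C^*\cong GL_n(\C)/SL_n(\C)\to\Sigma^n(\OM)$ via the determinant. Composing with the surjective $\exp:\C\to\C^*$ produces another holomorphic map $\C\to\Sigma^n(\OM)$, again forced to be constant; hence $\Phi_A$ is constant, proving constancy of $\sigma_n\circ\Psi$ on every regular fiber of $\sigma_n$.

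A continuity/density step extends this constancy to all fibers: an arbitrary $A_0\in\Sn$ is a limit of matrices $A_k\to A_0$ with distinct eigenvalues, so by continuity of $\sigma_n\circ\Psi$ the value $\sigma_n(\Psi(A_0))$ depends only on $\sigma_n(A_0)$. This defines a continuous $\widetilde{\Psi}:\Sigma^n(\OM)\to\Sigma^n(\OM)$ with $\widetilde{\Psi}\circ\sigma_n=\sigma_n\circ\Psi$; it is holomorphic on the regular locus (where $\sigma_n$ is a submersion), and since the non-regular locus is a proper analytic subvariety, Riemann's removable-singularity theorem promotes $\widetilde{\Psi}$ to a holomorphic self-map of $\Sigma^n(\OM)$. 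For properness: if $s_k\to\bdy\Sigma^n(\OM)$ and $A_k\in\sigma_n^{-1}(s_k)$, then properness of $\pi_n$ forces some eigenvalue of $A_k$ to tend to $\bdy\OM$, so $A_k\to\bdy\Sn$; properness of $\Psi$ then gives $\widetilde{\Psi}(s_k)=\sigma_n(\Psi(A_k))\to\bdy\Sigma^n(\OM)$. Hypothesis $({\sf P})$ now provides $\psi\in\hol(\OM,\OM)$ with $\widetilde{\Psi}\equiv\Sigma^n\psi$, and for $\sigma(A)=\{\lambda_1,\dots,\lambda_n\}$ the identity $\sigma_n(\Psi(A))=\pi_n(\psi(\lambda_1),\dots,\psi(\lambda_n))$ translates into \eqref{E:specprop}. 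Finally, to show $\psi$ is proper, fix $z_0\in\OM$ and let $\zeta_k\to\bdy\OM$: the diagonal matrices $D_k=\mathrm{diag}(\zeta_k,z_0,\dots,z_0)\in\Sn$ escape every compact subset, so $\Psi(D_k)\to\bdy\Sn$; since the eigenvalues of $\Psi(D_k)$ are $\psi(\zeta_k),\psi(z_0),\dots,\psi(z_0)$ and $\psi(z_0)\in\OM$ is fixed, only $\psi(\zeta_k)$ can escape, giving $\psi(\zeta_k)\to\bdy\OM$.

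The main obstacle is the descent step: forcing $\sigma_n(\Psi(gAg^{-1}))$ to be independent of $g$. The elementary/determinant/exponential chain above essentially builds a sufficiently rich family of holomorphic copies of $\C$ inside $GL_n(\C)$ to exploit the $\C$-Liouville property of $\Sigma^n(\OM)$; the whole argument collapses without Kobayashi hyperbolicity of $\Sigma^n(\OM)$, which is precisely what the hyperconvexity of $\OM$ buys us.
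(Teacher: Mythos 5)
Your overall strategy---descend $\Psi$ through the spectral map to a self-map of $\Sigma^n(\Omega)$, prove that the descended map is proper, and then invoke property $({\sf P})$---is the same as the paper's, and your fiber-constancy argument (unipotent lines, then the determinant and $\exp$ to exhaust $GL_n(\C)$, then a density step for singular fibers) is a workable, if more roundabout, substitute for the paper's Lemma~\ref{L:prespectra}, which handles all fibers at once via the single entire curve $\zt\mapsto\exp(-C\zt)(D+\zt U)\exp(C\zt)$ joining $A$ to a diagonal matrix in the same fiber. (Your density step should be phrased so that \emph{two} matrices in a common singular fiber are simultaneously perturbed to regular matrices lying in a common fiber, e.g.\ by adding the same diagonal perturbation to their triangularizations; as written it only discusses one matrix.) The genuine problem is your properness argument for $\widetilde{\Psi}$, which is precisely where the real content of the theorem lies.

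From $s_k\to\partial\Sigma^n(\Omega)$ you correctly deduce that $A_k$ leaves every compact subset of $\Sn$ and hence, by properness of $\Psi$, that $\Psi(A_k)$ leaves every compact subset of $\Sn$. But the conclusion $\sigma_n(\Psi(A_k))\to\partial\Sigma^n(\Omega)$ does not follow: $\Sn$ is unbounded, and a sequence can escape every compact subset of $\Sn$ while its spectrum remains in a fixed compact subset of $\Omega$ (for instance the nilpotent matrices $kE_{12}\in\Omega_2$ have constant spectrum $\{0\}$ but $\|kE_{12}\|\to\infty$). Equivalently, the spectral map $\sigma_n=\chi$ is itself not proper, so properness does not push forward formally. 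This is exactly why the theorem requires hyperconvexity of $\Omega$ for more than just Kobayashi hyperbolicity of $\Sigma^n(\Omega)$: the paper rules out the bad scenario with the pluricomplex Green function, namely Proposition~\ref{P:greentwo} (the Green function of $\Sn$ is constant on fibers of $\chi$), the Edigarian--Zwonek transformation rule for Green functions under proper maps, and the fact that on the hyperconvex domain $\Sigma^n(\Omega)$ one has $g_{\Sigma^n(\Omega)}(\chi(B_j);X_\nu)\to 0$ as $X_\nu\to\partial\Sigma^n(\Omega)$, which contradicts the strict negativity of the limiting Green function. You would need to supply this (or an equivalent) argument; as it stands, the properness of $\widetilde{\Psi}$ is unproved. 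Your final step, properness of $\psi$ via the diagonal matrices $\mathrm{diag}(\zeta_k,z_0,\dots,z_0)$, is fine but moot until the gap is closed.
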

\noindent{We shall present a proof of Theorem~\ref{T:holpropmatdom} in Section~\ref{S:plms&proof}. Our 
proof is motivated from the proof of Theorem~17 in \cite{ae&wz:gsp05}. We state here some important observations relevant to Theorem~\ref{T:holpropmatdom}.}

\begin{remark}\label{Rm:prophol=bihol}
Observe due to the result by Chakrabarty--Gorai (see Remark~\ref{Rm:exholautmatdom}), the above theorem applies
to any bounded hyperconvex domain. The punctured disc $\D^{*}:=\{\zt\in\D\,:\,\zt\neq 0\}$ is an example of a bounded domain that is not hyperconvex. 
So, although the family of proper holomorphic self-maps of $\Sigma^n(\D^{*})$ satisfies the property $(\sf{P})$,
we {\bf cannot} infer\,---\,appealing to the above theorem\,---\,that every proper holomorphic self-map $\Psi$ of $S_n(\D^{*})$ satisfies 
\eqref{E:specprop} for some proper holomorphic self-map $\psi$ of $\D^{*}$. Indeed, it would be interesting to find a counterexample in this case.
We also do not know whether $\Omega$ being hyperconvex itself implies that the property $(\sf{P})$ is satisfied for the family of proper holomorphic
self-maps of $\Sigma^n(\Omega)$.
\smallskip

It is also interesting at this point to recall an Alexander-type of result due to Zwonek for the spectral unit ball $\OM_n$. Alexander in \cite{Alex:PHMincomplex77} proved that all proper holomorphic self-maps of Euclidean unit ball $\mathbb{B}^n,\,n\geq 2$, are biholomorphisms. Zwonek in \cite{wz:phmspecunitball} proved that all proper holomorphic self-maps of  $\OM_n\,n\geq 2$ are biholomorphisms. The proof as given by Zwonek of this 
result crucially uses the relation \eqref{E:specprop} and the properties of proper holomorphic maps between analytic varieties.
It will be of interest to find out if an Alexander-type result holds true for $\Sn$ where $\Omega$ is a bounded hyperconvex domain. 
\end{remark}

\begin{remark}\label{Rem:finconcom}
Let $\Omega\subset\C$ be a domain such that $\C\setminus\Omega$ has finitely many connected components none of which is a single point.
It follows that for a fixed point $p\in\OM$, the function
$-g_{\OM}(\bcdot\,;\,p)$, where $g_{\OM}(\bcdot\,;\,\bcdot)$ denotes the Green's function for $\OM$,
is a negative subharmonic exhaustion function on $\OM$ whence $\OM$ is hyperconvex (see \cite[Chapter~4]{Ransford:pottheocomp}).
Futhermore, by Koebe's uniformization theorem for finitely connected domains,
these domains are biholomorphic to a domain all of whose boundary components are circle. 
Hence, by \cite[Corollary~1.6]{gbibddjj:phmsprs18}, the proper holomorphic self-maps of $\Sigma^n(\OM)$
satisfies the property $\sf(P)$ above. 
\end{remark}

\begin{remark}
Let $\Omega$ be a domain as in Remark~\ref{Rem:finconcom} and let $p\in\nat$ denote the number of connected components of 
$\C\setminus\Omega$. Mueller-Rudin showed in \cite{Mueller&Rudin:phsmpr91} 
that when $p\geq 3$
then the set of all proper holomorphic self-maps of $\Omega$ coincides with the set of all holomorphic 
automorphism of $\Omega$, and that the later set is a finite set. In the case when $p=2$, 
it is a fact that $\Omega$ is biholomorphic to an annulus $A_r:=\{z\in\C:r<|z|<1\}$ for some $r, 0<r<1$.
In the case of annulus it is also known (see \cite{Mueller&Rudin:phsmpr91} for a reference) that every proper holomorphic self-map
is a holomorphic automorphism. In \cite{Mueller&Rudin:phsmpr91}, the authors also constructed interesting domains of infinite connectivity that possess non-trivial proper holomorphic
self-mappings. The problem about the form of proper holomorphic self-mappings of their symmetrized product is interesting in its own.
\end{remark}

The second part of this article is devoted to the following Pick-Nevanlinna interpolation type problem.
\begin{itemize}
\item{} Given $\{(\zt_j,\,W_j)\in\D\times\Sn: 1\leq j\leq N\}$, $N\geq 2$ and $\zt_j$'s being distinct, find
necessary and sufficient conditions for the existence of a holomorphic map $F\in\hol(\D,\,\Sn)$ such that
$F(\zt_j)=W_j$ for all $j=1,\dots,N$.
\end{itemize}
\noindent{In the case when such a function $F$ exists, we shall say that $F$ is an {\em interpolant} of the
data $\{(\zt_j,\,W_j)\in\D\times\Sn: 1\leq j\leq N\}$.}
\smallskip

In this article, we shall only consider the above problem when $N=2$ or $N=3$. Starting with $N=2$,
we shall provide a necessary condition for the existence of a holomorphic interpolant. This necessary
condition will remind the reader of the classical Schwarz lemma in one complex variables. But before we state this result, we need to introduce the following pseudo-distance.
\smallskip

The {\em Carath\'{e}odory pseudo-distance}, denoted by $C_{\OM}$, on a domain $\OM$ in $\C$ is defined by:
\vspace{0.1cm}
\begin{equation}\label{E:defn_carathdist}
C_{\Omega}(p,\,q):=\sup\{\mathcal{M}_{\D}(f(p),\,f(q))\,:\,f\in\hol(\Omega,\,\D)\}.\vspace{0.1cm}
\end{equation}
Here and elsewhere in this article 
$\mobi(z_1,z_2)$ is 
the {\em M{\"o}bius distance} between $z_1$ and $z_2$, defined as:
\[
 \mobi(z_1,z_2) \ := \ \hyper{z_1}{z_2} \quad\forall z_1,z_2\in\D.\vspace{0.1cm}
\]
The reader will notice that we have defined $C_{\OM}$ in terms of the M{\"o}bius distance rather than the hyperbolic
distance on $\D$. This is done purposely because {\em most} conclusions in metric geometry that rely on $C_{\OM}$ are essentially unchanged 
if $\mathcal{M}_{\D}$ is replaced by the hyperbolic distance on $\D$ in \eqref{E:defn_carathdist}, and because the  M{\"o}bius distance
arises naturally in the proofs of our theorems. 
\smallskip

A domain $\Omega\subset\C$ will be called {\em Carath\'{e}odory hyperbolic} if $C_{\OM}$ is a distance
in the sense of metric spaces. It is easy to see that $\Omega$ is Carath\'{e}odory hyperbolic if and only
if $H^{\infty}(\Omega)$, the set of all bounded holomorphic functions in $\Omega$, separates points in
$\Omega$; e.g. every bounded domain is Carath\'{e}odory hyperbolic. In what follows, we shall always 
consider domains $\Omega$ that are Carath\'{e}odory hyperbolic.
\smallskip

We now present our first result concerning the interpolation
problem above when $N=2$.
\begin{theorem}\label{T:twopointT}
Let $F\in\hol(\D,\,S_n(\Omega))$, $n\geq 2$, and let $\zt_1,\zt_2\in\D$. 
Write $W_j=F(\zt_j)$, and if $\lam\in\sigma(W_j)$, then let $m(\lam)$ denote the multiplicity of $\lam$ as a
zero of the minimal polynomial of $W_j$. Then:
\begin{equation}\label{E:2SchwarzIneq}
\max\left\{\max_{\mu\in\sigma(W_2)}\prod_{\lam\in\sigma(W_1)}C_{\Omega}(\mu,\lam)^{m(\lam)}, 
\ \max_{\lambda\in\sigma(W_1)}\prod_{\mu\in\sigma(W_2)}C_{\Omega}(\lam,\mu)^{m(\mu)}\right\} \ 
\leq \ \hyper{\zt_1}{\zt_2}.\vspace{0.1cm}
\end{equation}
\end{theorem}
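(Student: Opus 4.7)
By interchanging the roles of $(\zt_1,W_1)$ and $(\zt_2,W_2)$, the two terms in the outer maximum of \eqref{E:2SchwarzIneq} are symmetric, so it suffices to establish
\[
\prod_{\lam\in\sigma(W_1)} C_\Omega(\mu,\lam)^{m(\lam)} \leq \hyper{\zt_1}{\zt_2}
\]
for each fixed $\mu\in\sigma(W_2)$. My strategy---modelled on Ransford's argument for the spectral unit ball---is to build a scalar holomorphic test function $\pi : \Omega \to \D$ which (a) vanishes at every $\lam\in\sigma(W_1)$ to order at least $m(\lam)$, so that holomorphic functional calculus forces $\pi(W_1)=0$, and (b) has $|\pi(\mu)|$ arbitrarily close to the target product. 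A Schwarz-type estimate for the spectral radius applied to the induced map $\D\to\Omega_n$, $\zt\mapsto\pi(F(\zt))$, then closes the loop.

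Given $\eps>0$, for each $\lam\in\sigma(W_1)$ the definition of $C_\Omega$ furnishes $f_\lam\in\hol(\Omega,\D)$ with $\mobi(f_\lam(\mu),f_\lam(\lam))>C_\Omega(\mu,\lam)-\eps$; post-composing with the M\"obius transformation of $\D$ sending $f_\lam(\lam)$ to $0$ produces $\Phi_\lam\in\hol(\Omega,\D)$ with $\Phi_\lam(\lam)=0$ and $|\Phi_\lam(\mu)|>C_\Omega(\mu,\lam)-\eps$. I then set
\[
\pi(t) := \prod_{\lam\in\sigma(W_1)} \Phi_\lam(t)^{m(\lam)} \in \hol(\Omega,\D),
\]
and claim $\pi(W_1)=0$. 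Writing $W_1=SJS^{-1}$ in Jordan canonical form with $J=\bigoplus_{\lam_0} J_{\lam_0}$, the very definition of $m(\lam_0)$ as the multiplicity of $\lam_0$ in the minimal polynomial gives $J_{\lam_0}=\lam_0 I+N_{\lam_0}$ with $N_{\lam_0}^{m(\lam_0)}=0$; since $\Phi_{\lam_0}(\lam_0)=0$, the matrix $\Phi_{\lam_0}(J_{\lam_0})$ is a polynomial in $N_{\lam_0}$ without constant term, so $\Phi_{\lam_0}(J_{\lam_0})^{m(\lam_0)}=0$. All factors of $\pi(W_1)$ commute (each is a function of $W_1$), so the product vanishes on every generalized eigenspace of $W_1$, whence $\pi(W_1)=0$.

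Now let $G:\D\to\Omega_n$ be defined by $G(\zt):=\pi(F(\zt))$ via holomorphic functional calculus; this is holomorphic (its eigenvalues lie in $\pi(\Omega)\subset\D$) and $G(\zt_1)=0$. The heart of the argument is a Schwarz--Pick lemma for the spectral radius on $\Omega_n$: if $G(\zt_1)=0$, then $\rho(G(\zt))\leq\hyper{\zt_1}{\zt}$ for all $\zt\in\D$, where $\rho$ denotes the spectral radius. To prove this I would pull back by a disc automorphism to reduce to $\zt_1=0$, factor $G(\zt)=\zt\widetilde G(\zt)$ with $\widetilde G$ holomorphic into $M_n(\C)$, and apply Vesentini's theorem (subharmonicity of $\log\rho\circ\widetilde G$): on $|\zt|=r<1$ one has $\rho(\widetilde G(\zt))=\rho(G(\zt))/r<1/r$, so the maximum principle gives $\rho(\widetilde G)<1/r$ on $\{|\zt|\leq r\}$; letting $r\nearrow1$ yields $\rho(\widetilde G)\leq 1$ on $\D$, i.e.\ $\rho(G(\zt))\leq|\zt|$. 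Applying this at $\zt=\zt_2$ and invoking the spectral mapping theorem (so that $\pi(\mu)\in\sigma(G(\zt_2))$) gives
\[
\prod_{\lam\in\sigma(W_1)}\bigl(C_\Omega(\mu,\lam)-\eps\bigr)^{m(\lam)} \ < \ |\pi(\mu)| \ \leq \ \rho(G(\zt_2)) \ \leq \ \hyper{\zt_1}{\zt_2};
\]
sending $\eps\to0$ completes the proof.

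The step I expect to be the main obstacle is the spectral-radius Schwarz lemma in the third paragraph: it genuinely uses the plurisubharmonicity of $\log\rho$ on matrix-valued holomorphic maps, and is what allows the exponent to be the minimal-polynomial multiplicity $m(\lam)$ rather than the algebraic multiplicity that would appear if one merely composed with $\det:\Omega_n\to\D$ and invoked the classical scalar Schwarz--Pick inequality.
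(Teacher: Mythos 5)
Your proof is correct and follows essentially the same route as the paper: a product of Carath\'eodory-extremal-type functions vanishing on $\sigma(W_1)$ to the minimal-polynomial multiplicities, pushed through the holomorphic functional calculus and combined with the spectral-radius Schwarz lemma (the paper's Lemma~4.3, proved by exactly your Vesentini/maximum-principle argument) and the spectral mapping theorem. The only cosmetic difference is that you use $\eps$-near-extremal functions aimed at a fixed $\mu$ and let $\eps\to 0$, whereas the paper works with the exact extremals $G_{\Omega}(\lambda,\,z;\,\cdot)$ carrying a free parameter $z$ that is specialized to $z=\mu$ at the end.
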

\noindent{
We shall present our proof of Theorem~\ref{T:twopointT} in Section~\ref{S:twopointT}. When $\Omega=\D$, we know that $C_{\Omega}(z_1,z_2)= \mobi(z_1,z_2)$. Substituting this into 
the inequality \eqref{E:2SchwarzIneq} establishes
Theorem~1.5 in \cite{gb:itplSpUb07} which is an 
important result related to the $2$-point interpolation problem from $\D$ to $\Omega_n$. Hence
Theorem~\ref{T:twopointT} gives a generalization of Theorem~1.5 in \cite{gb:itplSpUb07} for 
all matricial domains $\Sn$.}
\smallskip

It is shown in \cite{gb:itplSpUb07} that when $\Omega=\D$ the above result gives a necessary 
condition for the existence  of an interpolant for the $2$-point interpolation problem that is inequivalent 
to the necessary condition that are known in the literature \cite{costara:osNPp05, ogle:thesis99}. Moreover,
in this case ($\Omega=\D$) when $n\geq3$, there exists a $2$-point data set for which \eqref{E:2SchwarzIneq} implies that the data cannot admit an interpolant whereas the condition in \cite{costara:osNPp05, ogle:thesis99} are inconclusive.
\smallskip

At this point we wish to discuss an important tool that plays a crucial role in establishing the inequality
\eqref{E:2SchwarzIneq} and is at the heart of our next theorem related to the $3$-point interpolation 
problem. We begin with the extremal problem associated to the Carath\'{e}odory
pseudo-distance $C_{\Omega}$ on a domain $\Omega$ in $\C$. Recall:
\begin{align}
C_{\Omega}(p,\,q)&:=\sup\{\mathcal{M}_{\D}(f(p),\,f(q))\,:\,f\in\hol(\Omega,\,\D)\}\nonumber\\
&=\sup\{|\,f(q)\,|\,:\,f\in\hol(\Omega,\,\D)\,:\,f(p)=0\}.\label{E:alt_def_cara}
\end{align}
The equality in \eqref{E:alt_def_cara} is due to the fact that the automorphism group of $\D$
acts transitively on $\D$ and the M{\"o}bius distance is invariant under its action.
Applying Montel's Theorem, it is easy to see that there exists a function 
$g\in\hol(\Omega,\,\D)$ such that $g(p)=0$ and $g(q)=C_{\Omega}(p,\,q)$. Such a function is
called an \emph{extremal solution} for the extremal problem determined by \eqref{E:alt_def_cara}.
It is a fact that for Carath\'{e}odory hyperbolic domains there is a unique extremal
solution (see the last two paragraphs in \cite{fsh:shwaLemInnfun69}).
Let us denote by $G_{\Omega}(p,\,q;\, \bcdot{})$
the unique extremal solution determined by the extremal problem \eqref{E:alt_def_cara}.

\begin{definition}\label{Def:genminblashprod}
Given $A\in\Sn$ and $z\in\Omega\setminus{\sigma(A)}$, consider the function:
\begin{equation}\label{E:genminblashprod}
B(A,\,z;\,\bcdot):=\prod_{\lam\in\sigma(A)}\,G_{\Omega}(\lam,\,z;\,\bcdot)^{m(\lam)}
\end{equation}
where $G_{\Omega}(\lam,\,z;\,\bcdot)$ is the unique extremal solution corresponding to the pair $(\lam,z)$
as discussed above and $m(\lam)$ is the multiplicity of $\lam\in\sigma(A)$ as a zero of the minimal polynomial of $A$.
\end{definition}
\noindent{Observe that for every $z\in\Omega\setminus{\sigma(A)}$, $B(A,\,z;\,\bcdot)\in\hol(\Omega,\,\D)$
and has a zero at each $\lam\in\sigma(A)$ of multiplicity at least $m(\lam)$.
For each $z\in\Omega\setminus{\sigma(A)}$, $B(A,\,z;\,\bcdot)$ induces, via the holomorphic functional calculus (which we will discuss in
Section~\ref{S:holo_fc}), a holomorphic map from $\Sn$ to $\OM_n$ that maps $A$ to $0\in M_n(\C)$.
This simple trick turns out to be quite important in addressing the $3$-point interpolation problem. 
\smallskip

Now we are ready to present the result of this article related to the $3$-point interpolation problem
from $\D$ to $\Sn$. In what follows, $B_{j,\,z}$ will denote the 
function as defined in \eqref{E:genminblashprod}\,---\,as well as its extension to $\Sn$\,---\,associated to the matrix $W_j$,
$j=1,2,3$.

\begin{theorem}\label{T:3pt_nec}
Let $\zt_1,\zt_2,\zt_3\in\D$ be distinct points and let $W_1,W_2,W_3\in\Sn$, $n\geq 2$. 
Let $m(j,\,\lambda)$ denote the multiplicity of $\lambda$ as a zero of the minimal polynomial
of $W_j$, $j\in\{1,2,3\}$.
Given $j,k\in\{1,2,3\}$ such that $j\not=k$, $z\in\Omega\setminus\sigma(W_k)$ and $\nu\in \D$, we write:
\[
q_z(\nu,j,k):=\max\left\{\intf{[}{]}{m(j,\,\lambda)-1}
{\mathsf{ord}_{\lambda}{B'_{k,\,z}}+1}+1:\,\lambda\in \sigma(W_j)\cap B_{k,\,z}^{-1}\{\nu\} \right\}.
\]
Finally, for each $k\in\{1,2,3\}$ let
\[
G(k):=\max\,(\{1,2,3\}\setminus\{k\}),\,\,\text{and}\,\,\,
L(k):=\min\,(\{1,2,3\}\setminus\{k\}).
\]
If there exists a map $F\in\hol(\D,\,\Sn)$ such that $F(\zt_j)\,=\,W_j$, $j\in\{1,2,3\}$,
then for each $k\in\{1,2,3\}$, and $z\in\Omega\setminus\sigma(W_k)$ we have: 
\begin{itemize}
\item either $\sigma\left(B_{k,\,z}(W_{G(k)})\right)\subset
D\left(0,\,|\,\psi_{k}(\zt_{G(k)})\,|\right)$,
$\sigma\left(B_{k,\,z}(W_{L(k)})\right)\subset D\left(0,\,|\,\psi_{k}(\zt_{L(k)})\,|\right)$ and
\begin{align*}
{}&\max\left\{\sub{\mu\in\sigma(B_{k,\,z}(W_{L(k)}))}{\max}
\prod_{\nu\in\sigma(B_{k,\,z}(W_{G(k)}))}
{\mathcal{M}_{\D}\left(\intf{}{}{\mu}{\psi_k(\zt_{L(k)})},\,\intf{}{}{\nu}{\psi_k(\zt_{G(k)})}
\right)}^{q_z(\nu,\,G(k),\,k)},\right.\\
&\left.\sub{\mu\in\sigma(B_{k,\,z}(W_{G(k)}))}{\max}\prod_{\nu\in\sigma(B_{k,\,z}(W_{L(k)}))}
{\mathcal{M}_{\D}\left(\intf{}{}{\mu}{\psi_k(\zt_{G(k)})},\,\intf{}{}{\nu}{\psi_k(\zt_{L(k)})}
\right)}^{q_z(\nu,\,L(k),\,k)}\right\}\leq\mathcal{M}_{\D}\left(\zt_{L(k)},\,\zt_{G(k)}\right)
\end{align*}

\item or there exists a $\theta_z\in\mathbb{R}$ such that
\[
{B^{-1}_{k,\,z}}\{e^{i\theta_z}\psi_{k}(\zt_{G(k)})\}\subseteq\sigma(W_{G(k)}) \  \text{and} \
{B^{-1}_{k,\,z}}\{e^{i\theta_z}\psi_{k}(\zt_{L(k)})\}\subseteq\sigma(W_{L(k)}).
\]
\end{itemize}
\end{theorem}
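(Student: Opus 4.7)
The plan is to reduce the three-point interpolation problem for $F$ to a two-point problem for a holomorphic map into the spectral unit ball $\OM_n$, to which Theorem~\ref{T:twopointT} (in the baseline case $\OM=\D$) can then be applied. Fix $k\in\{1,2,3\}$ and $z\in\OM\setminus\sigma(W_k)$. By Definition~\ref{Def:genminblashprod}, the scalar map $B_{k,z}\in\hol(\OM,\D)$ vanishes at each $\lam\in\sigma(W_k)$ to order at least $m(k,\lam)$; the holomorphic functional calculus of Section~\ref{S:holo_fc} therefore produces a holomorphic extension $\Bkz\colon\Sn\lrarw\OM_n$ satisfying $\Bkz(W_k)=0$ together with the spectral-mapping identity $\sigma(\Bkz(A))=B_{k,z}(\sigma(A))$. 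Letting $\psi_k$ be the Möbius automorphism of $\D$ sending $\zt_k$ to $0$, I would form
\[
H \ := \ \Bkz\circ F\circ \psi_k^{-1}\,\colon\,\D \lrarw \OM_n,
\]
which is holomorphic, with $H(0)=0$ and $H(\psi_k(\zt_j))=\Bkz(W_j)$ for $j\in\{L(k),G(k)\}$. Invoking the Schwarz-type lemma for $\OM_n$ (Vesentini): the function $w\mapsto\log r(H(w))$ is subharmonic on $\D$ and tends to $-\infty$ at $0$, hence $\sigma(H(w))\subset\overline{D}(0,|w|)$ for all $w\in\D$. A second application of the maximum principle, now to $\log r(H(w))-\log|w|$, yields a clean dichotomy: either (A) $r(H(w))<|w|$ for every $w\in\D^{*}$, or (B) $r(H(w))=|w|$ identically on $\D$.

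In Case~(A) the function $\tilde H(w):=w^{-1}H(w)$ extends holomorphically across $0$ (because $H(0)=0$), and $\log r\circ\tilde H$ is strictly negative on $\D^{*}$; the maximum principle rules out the value $0$ at $w=0$ as well, so $\tilde H\in\hol(\D,\OM_n)$. I would then apply Theorem~\ref{T:twopointT} with $\OM=\D$ to the two data points $(\psi_k(\zt_{L(k)}),\tilde H(\psi_k(\zt_{L(k)})))$ and $(\psi_k(\zt_{G(k)}),\tilde H(\psi_k(\zt_{G(k)})))$. Using the $\mobi$-invariance $\mobi(\psi_k(\zt_{L(k)}),\psi_k(\zt_{G(k)}))=\mobi(\zt_{L(k)},\zt_{G(k)})$ together with the identity $\sigma(\tilde H(\psi_k(\zt_j)))=\sigma(\Bkz(W_j))/\psi_k(\zt_j)$, this will produce the desired two-sided inequality. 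The arithmetic heart of this step is identifying the exponent $q_z(\nu,j,k)$ with the multiplicity of $\nu$ in the minimal polynomial of $\Bkz(W_j)$: a Jordan-block analysis of the functional calculus shows that if $\mathsf{ord}_\lam(B_{k,z}-\nu)=r=\mathsf{ord}_\lam B'_{k,z}+1$, then a Jordan block $J_{\lam,s}$ of $W_j$ contributes to $\Bkz(W_j)$ a largest block at the eigenvalue $\nu$ of size $\lfloor(s-1)/r\rfloor+1$, whence the maximum over all $\lam\in\sigma(W_j)\cap B_{k,z}^{-1}\{\nu\}$ recovers $q_z(\nu,j,k)$. The strict containment $\sigma(\Bkz(W_j))\subset D(0,|\psi_k(\zt_j)|)$ is then immediate from Case~(A).

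Case~(B) is the main obstacle. Here I would analyse the analytic variety $V:=\{(w,\mu)\in\D\times\C\,:\,\det(w\mu I - H(w))/w^{n}=0\}$, whose fiber over $w$ is the multiset of normalised eigenvalues $\{\lam_i(w)/w\}$ of $H(w)$. Each irreducible component $V_i$ carries the subharmonic function $|\mu|$ with $|\mu|\leq 1$; Case~(B) forces the projection to $\D$ of the locus $\{|\mu|=1\}$ to be surjective, so that some $V_i$ must lie entirely inside $\{|\mu|=1\}$ and hence, by the maximum principle, reduces to the graph of a unimodular constant $\mu\equiv e^{i\theta_z}$. Consequently $e^{i\theta_z}w\in\sigma(H(w))$ for every $w\in\D$, placing $e^{i\theta_z}\psi_k(\zt_{G(k)})$ in $B_{k,z}(\sigma(W_{G(k)}))$, and the same reasoning at $w=\psi_k(\zt_{L(k)})$ supplies the companion statement for $L(k)$. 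The delicate remaining step, which I anticipate to be the hard part of this case, is to upgrade these point-membership facts to the full preimage containment $B_{k,z}^{-1}\{e^{i\theta_z}\psi_k(\zt_{G(k)})\}\subseteq\sigma(W_{G(k)})$ (and likewise for $L(k)$). This I expect to obtain by combining the additivity law $\mathrm{alg\,mult}_\nu(\Bkz(A))=\sum_{\lam\in\sigma(A)\cap B_{k,z}^{-1}\{\nu\}}\mathrm{alg\,mult}_\lam(A)$ from the functional calculus with a count of the sheet-degree contributed to $H$ by the extremal component of $V$; matching these two multiplicities leaves no room for a preimage of $e^{i\theta_z}\psi_k(\zt_{G(k)})$ that is not already an eigenvalue of $W_{G(k)}$.
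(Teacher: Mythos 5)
Your overall architecture coincides with the paper's: compose $F\circ\psi_k^{-1}$ with the functional-calculus extension of $B_{k,\,z}$ to get a map $\D\to\OM_n$ vanishing at $0$, factor out $w$ (the paper's Lemma~\ref{L:fl2}), split into the interior case and the boundary case, and in the interior case apply the two-point inequality \eqref{E:alttwopointT} to $w^{-1}H(w)$ together with the computation of the minimal polynomial of $B_{k,\,z}(W_j)$ (your Jordan-block count $\lfloor (s-1)/r\rfloor+1$ is exactly the paper's Theorem~\ref{T:minpo_holo_func_anal}, and your exponent identification is correct). Your Case~(A) is therefore sound and essentially identical to the paper's Case~1. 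In Case~(B), your variety argument producing a persistent unimodular normalised eigenvalue $e^{i\theta_z}$ is a re-derivation of the paper's Lemma~\ref{L:fl1} (which it cites from \cite{chandel:3spintp20}); modulo the usual care about applying the maximum principle on the regular locus of an irreducible component, that part is acceptable.

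The genuine gap is the final step of Case~(B), which you yourself flag as uncertain. The multiplicity-matching strategy you propose cannot work: the additivity law $\mathrm{alg\,mult}_{\nu}(B_{k,\,z}(A))=\sum_{\lambda\in\sigma(A)\cap B_{k,\,z}^{-1}\{\nu\}}\mathrm{alg\,mult}_{\lambda}(A)$ only sums over preimages of $\nu$ that already lie in $\sigma(A)$, so it gives no information whatsoever about points of $B_{k,\,z}^{-1}\{\nu\}$ \emph{outside} $\sigma(W_{G(k)})$ --- and for a general Carath\'eodory hyperbolic $\Omega$ the fibre $B_{k,\,z}^{-1}\{\nu\}$ need not even be finite a priori, whereas $\sigma(W_{G(k)})$ is. Comparing that sum with the sheet degree of the extremal component of $V$ therefore cannot ``leave no room'' for extraneous preimages. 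The missing idea is the paper's identity-principle trick: for each $\zt\in\D$ choose $\omega_{\zt}\in\sigma(F\circ\psi_k^{-1}(\zt))$ with $B_{k,\,z}(\omega_{\zt})=e^{i\theta_z}\zt$; the map $\zt\mapsto\omega_{\zt}$ is injective, so $E=\{\omega_{\zt}\}$ is an uncountable subset of $\Omega$ on which the holomorphic function $x\longmapsto\det\bigl(x\mathbb{I}-F\circ\psi_k^{-1}(e^{-i\theta_z}B_{k,\,z}(x))\bigr)$ vanishes; hence it vanishes identically on $\Omega$, which yields $B_{k,\,z}^{-1}\{e^{i\theta_z}\zt\}\subset\sigma(F\circ\psi_k^{-1}(\zt))$ for \emph{every} $\zt\in\D$ and, on setting $\zt=\psi_k(\zt_{L(k)})$ and $\zt=\psi_k(\zt_{G(k)})$, the full preimage containments of the second bullet. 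Without this (or an equivalent global argument on $\Omega$), your proof establishes only that $e^{i\theta_z}\psi_k(\zt_j)\in B_{k,\,z}(\sigma(W_j))$, which is strictly weaker than the stated conclusion.
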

\noindent Here, $\psi_j$ denotes
the automorphism $\psi_j(\zt):=(\zt-\zt_j)/(1-\overline{\zt}_j\zt)^{-1}$, $\zt\in\D$, of $\D$ and
$[\bcdot]$ denotes the greatest-integer function. Given $a\in \C$ and a function $g$ that is
holomorphic in a neighbourhood of $a$, $\mathsf{ord}_a{g}$ will denote the order of vanishing of
$g$ at $a$ (with the understanding that $\mathsf{ord}_a{g} = 0$ if $g$ does not vanish at $a$).
\smallskip

\begin{remark}\label{Rm:coromaintD}
We shall present our proof of Theorem~\ref{T:3pt_nec} in Section~\ref{S:3pint}. The proof is strongly motivated from the proof of Theorem~1.4 in \cite{chandel:3spintp20}. In fact, Theorem~\ref{T:3pt_nec} above is a generalization of Theorem~1.4 in \cite{chandel:3spintp20}. This is shown in Observation~\ref{Obs:coromaintD} by explicitly computing the function
$B_{k,\,z}(\bcdot)$. It turns out that when $\Omega=\D$ the statement of Theorem~\ref{T:3pt_nec}
coincides with the statement of Theorem~1.4 in  \cite{chandel:3spintp20}. We also refer the reader to
Remark~1.5 in  \cite{chandel:3spintp20} for the discussion of how Theorem~1.4 in
\cite{chandel:3spintp20} is different from the other results that are present in the literature related
to the $3$-point interpolation problem.
\end{remark}

\section{Preliminaries and proofs of Theorem~\ref{T:holautmatdom} and 
Theorem~\ref{T:holpropmatdom}}\label{S:plms&proof}

In this section, we shall present our proofs of Theorem~\ref{T:holautmatdom} and
Theorem~\ref{T:holpropmatdom}. 
At the heart of our proofs is a result that itself is quite interesting. It is Lemma~\ref{L:prespectra} below.
But before we prove this lemma, we shall take a digression and recall the definition of Kobayashi pseudo-distance and Kobayashi hyperbolicity.
\smallskip

\subsection{The Kobayashi pseudo-distance}
Let ${D}\subset\C^n$ be a domain and let ${\sf h}$ denote the hyperbolic distance on $\D$ induced by the Poincar\'{e} metric on $\D$.
The Kobayashi pseudo-distance $K_{{D}}:D\times D\lrarw [0,\,\infty)$ is defined by:
given two points $p,q\in{D}$,
\[
K_{{D}}(p, q):=\inf\Big\{\sum_{i=1}^n {\sf h}(\zt_{i-1},\zt_i)\,:\,(\phi_1,\dots,\phi_n;\zt_0,\dots,\zt_n)\in
\mathfrak{A}(p, q)\Big\}
\]
where $\mathfrak{A}(p, q)$ is the set of all analytic chains in ${D}$ joining $p$ to $q$. Here,
$(\phi_1,\dots,\phi_n;\zt_0,\dots,\zt_n)$ is an {\em analytic chain} in ${D}$ joining $p$ to $q$ if 
$\phi_i\in\hol(\D,\,D)$ for each $i$ such that
\[
p=\phi_1(\zt_0), \ \ \phi_n(\zt_n)=q \ \ \text{and} \ \ \phi_i(\zt_i)=\phi_{i+1}(\zt_i)
\]
for $i=1,\dots,n-1$.
\smallskip

It follows from the definition that $K_{D}$ is a pseudo-distance. Using the Schwarz lemma in
one complex variable one could see that
$K_{\D}\equiv{\sf h}$. One of the most important properties that the Kobayashi pseudo-distance enjoys is the following: if $F: {D}_1\lrarw {D}_2$ is a holomorphic map, then
$K_{{D}_2}\big(F(p), F(q)\big)\leq K_{{D}_1}(p, q)$
for all $p,q\in {D}_1$.
\smallskip

A domain ${D}\subset\C^n$ is called Kobayashi hyperbolic if the pseudo-distance $K_{{D}}$ is 
a true distance, i.e., $K_{{D}}(p, q)=0$ if and only if $p=q$. The collection of all bounded domains is an
example of Kobayashi hyperbolic domains.
We refer the interested reader to \cite[Chapter~3]{JP13:jarnicki2013invariant} for a comprehensive 
account on Kobayashi pseudo-distance.
It is a fact that $K_{\C^d}\equiv 0$ for all $d\geq 1$. This is not difficult to prove but we skip the proof of 
this fact here (see \cite[Chapter~3]{JP13:jarnicki2013invariant}). The following is a generalization of 
Liouville's Theorem and is obvious:
\begin{result}\label{Res:LT}
Let $D\subset\C^n$ be a Kobayashi hyperbolic domain. Let $F:\C^d\lrarw{D}$ be a holomorphic map then $F$ is a constant function.
\end{result}

We need one more tool to state the lemma alluded to at the beginning of this section. Given a matrix $A\in M_n(\C)$, we write its characteristic polynomial as $\chi(A)(t)=t^n+\sum_{k=1}^n(-1)^k\chi_k(A)\,t^{n-k}$, where $\chi_k(A)$ are polynomials in the entries of $A$. It is obvious that this naturally leads to a map $\chi : \Sn\lrarw\Sigma^n(\Omega)$ defined by:
\[
\chi(A)=\big(\chi_1(A),\dots,\chi_n(A)\big),
\]
where $\chi_k(A)$'s are as above. Notice that $\chi$ is a holomorphic map from $\Sn$ to $\Sigma^n(\Omega)$.
\smallskip

Now we can state the lemma which implies that a holomorphic self-map of $\Sn$ preserves 
the spectra of matrices. More precisely:
\begin{lemma}\label{L:prespectra}
Let $\Omega\subset\C$ be a domain and let $n\in\nat$, $n\geq 2$, be such that $\#(\C\setminus\Omega)\geq 2n$.
Let $F:\Sn\lrarw\Sn$ be a holomorphic self-map. Then for every 
$A,\,B\in\Sn$ such that $\chi(A)=\chi(B)$, we have $\chi(F(A))=\chi(F(B))$.
\end{lemma}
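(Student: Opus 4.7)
The plan is to connect $A$ and $B$ by a chain of entire holomorphic curves in $\Sn$, each lying in a single fiber of $\chi$, and then apply the Liouville-type Result~\ref{Res:LT} to the compositions of these curves with $\chi\circ F$. The cardinality hypothesis $\#(\C\setminus\OM)\geq 2n$ is exactly what makes $\Sigma^n(\OM)$ Kobayashi hyperbolic (as noted in Remark~\ref{Rm:exholautmatdom}), which is what enables Result~\ref{Res:LT} to apply to maps into $\Sigma^n(\OM)$.

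First I would introduce the Jordan--Chevalley decompositions $A=S_A+N_A$ and $B=S_B+N_B$, where $S_A,S_B$ are semisimple, $N_A,N_B$ are nilpotent, and each pair commutes. Consider the entire map $t\mapsto S_A+tN_A$. Since $S_A$ and $N_A$ commute, they can be simultaneously triangularized with $S_A$ diagonal and $N_A$ strictly upper triangular; consequently $\chi(S_A+tN_A)=\chi(S_A)=\chi(A)$ for every $t\in\C$, so the curve lies entirely in $\Sn$ and joins $S_A$ (at $t=0$) to $A$ (at $t=1$). An analogous polynomial curve joins $S_B$ to $B$.

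Next I would connect $S_A$ to $S_B$. Because $S_A$ and $S_B$ are diagonalizable with the same characteristic polynomial, they have the same eigenvalues with the same multiplicities and are therefore similar, i.e.\ $S_B=QS_AQ^{-1}$ for some $Q\in GL_n(\C)$. Since the exponential $\exp:M_n(\C)\to GL_n(\C)$ is surjective, I may write $Q=\exp(X)$ and define $H(t):=\exp(tX)\,S_A\,\exp(-tX)$. This is entire, $H(0)=S_A$, $H(1)=S_B$, and since conjugation preserves the characteristic polynomial we get $\chi(H(t))=\chi(S_A)$ for all $t$, so $H(\C)\subset\Sn$.

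Finally, composing each of the three entire curves above with $\chi\circ F:\Sn\lrarw\Sigma^n(\OM)$ produces holomorphic maps $\C\to\Sigma^n(\OM)$. By Result~\ref{Res:LT} each of these is constant, and chaining the three equalities of constants gives $\chi(F(A))=\chi(F(S_A))=\chi(F(S_B))=\chi(F(B))$, as desired. The main point to verify carefully is the spectral computation $\chi(S_A+tN_A)=\chi(A)$\,---\,the commutativity-plus-nilpotency argument showing that the perturbation $tN_A$ leaves the eigenvalues untouched; the other ingredients (existence of the Jordan--Chevalley decomposition, surjectivity of $\exp$ on $GL_n(\C)$, and Kobayashi hyperbolicity of $\Sigma^n(\OM)$ under the stated cardinality condition) are then bookkeeping on top of standard results.
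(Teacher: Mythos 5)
Your proof is correct and follows essentially the same strategy as the paper: connect $A$ and $B$ by entire curves lying in fibers of $\chi$, then use the Kobayashi hyperbolicity of $\Sigma^n(\OM)$ (via the cardinality hypothesis and Zwonek's theorem) to conclude that $\chi\circ F$ composed with each curve is constant. The only cosmetic difference is that the paper packages your three curves into a single one, $\zt\mapsto\exp(-C\zt)(D+\zt U)\exp(C\zt)$, joining $A$ to a diagonal matrix $D$ with $\chi(D)=\chi(A)$, so that $A$ and $B$ are both linked directly to the same $F(D)$.
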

\begin{proof}
Let $D$ be a diagonal matrix such that $\chi(D)=\chi(A)$. We know that there exists $C\in M_n(\C)$ and an 
strictly upper triangualr matrix $U$ such that
\[
A=\exp(-C)\,(D+U)\,\exp(C).
\]
Now consider the map $f:\C\lrarw M_n(\C)$ defined by 
\[
f(\zt):=\exp(-C\,\zt)\,(D+\zt\,U)\,\exp(C\zt) \ \ \ \forall\zt\in
\C.
\]
Notice that $\chi(f(\zt))=\chi(D+\zt\,U)=\chi(D)$, hence $f(\C)\subset\Sn$. This lets us to define the map
$\Psi(\zt):=\chi\circ F\circ f(\zt)$ for all $\zt\in\C$. It is obvious that $\Psi$ is a holomorphic map from 
$\C$ to $\Sigma^n(\Omega)$.
\smallskip

Under the condition on $\Omega$ as in the statement of the lemma, a result of Zwonek
\cite[Theorem~16]{wz:ftpspcm20} implies that $\Sigma^n(\Omega)$ is 
Kobayashi hyperbolic. Then Result~\ref{Res:LT} implies that $\Psi$ is a constant function.
Hence, $\Psi(0)=\chi(F(D))=\Psi(1)=\chi(F(A))$.
Proceeding similarly we get $\chi(F(D))=\chi(F(B))$. This establishes the lemma.
\end{proof}

The proof of the above lemma is motivated from that of Theorem~1 in \cite{tjr&mcw:hsmsub91} 
by Ransford--White. But it is a far reaching generalization of Theorem~1 in \cite{tjr&mcw:hsmsub91};
e.g., when $\Omega$ is any Carath\'{e}odory hyperbolic domain, then $\#(\C\setminus\Omega)$ cannot be
finite. In particular, Carath\'{e}odory hyperbolic domains satisfy the condition as in the statement of
Lemma~\ref{L:prespectra}. 
\smallskip

We shall now present our proof of Theorem~\ref{T:holautmatdom}.

\subsection{The proof of Theorem~\ref{T:holautmatdom}}\label{SS:pot}
\begin{proof} 
Consider a relation $G$ from $\Sigma^n(\Omega)$ into $\Sigma^n(\Omega)$ defined by:
\begin{equation}\label{E:autsym}
G(X):=\chi\circ \Psi\circ{\chi}^{-1}(\{X\}) \ \ \ \forall X\in \Sigma^n(\Omega).
\end{equation}
Here $\Psi$ is as in the statement of Theorem~\ref{T:holautmatdom}.
From Lemma~\ref{L:prespectra}, it follows that for each $X\in \Sigma^n(\Omega)$, $G(X)$ is a singleton set.
Hence $G:\Sigma^n(\Omega)\lrarw\Sigma^n(\Omega)$ is a well defined map.
\smallskip

\noindent{\bf Claim.} $G$ is holomorphic.
\smallskip

\noindent To see this, choose an arbitrary $X\in\Sigma^n(\Omega)$ and fix it. Now consider the polynomial
$P_X[\,t\,]:=t^n+\sum_{j=1}^n(-1)^jX_j\,t^{n-j}$. We define a map $\tau:\Sigma^n(\Omega)\lrarw M_n(\C)$ by setting:
\begin{equation}\label{E:tau}
\tau(X):=\mathsf{C}\big(P_X\big)
\end{equation}
where $\mathsf{C}\big(P_X\big)$ denotes the companion matrix of the polynomial $P_X$. Recall:
given a monic polynomial of degree $k$ of the form $p[\,t\,]=t^k+\sum_{j=1}^ka_j\,t^{k-j}$, where $a_j\in\C$,
the \emph{companion matrix} of $p$ is the matrix $\mathsf{C}(p)\in M_{k}(\C)$ given by
\[
\mathsf{C}(p):=
\begin{bmatrix}
\ 0  & {} & {} & -a_k \ \\
\ 1  & 0  & {} & -a_{k-1} \ \\
\ {} & \ddots  & \ddots & \vdots \ \\
\ \text{\LARGE{0}} &   & 1 & -a_{1} \
\end{bmatrix}_{k\times k}.
\]
It is a fact that $\chi(\mathsf{C}(p))(t)=p(t)$. From this, it follows that 
$\tau$ is holomorphic and $\chi\circ\tau=\mathbb{I}$ on
$\Sigma^n(\Omega)$.
This, in particular, implies that $\tau(X)\in\chi^{-1}\{X\}$. Applying Lemma~\ref{L:prespectra} again, we see that
$\chi\circ \Psi\circ{\chi}^{-1}(\{X\})=\chi\circ \Psi\circ\tau(X)$, i.e., $G(X)=\chi\circ \Psi\circ\tau(X)$. Since each of the maps 
$\chi, \Psi, \tau$ are holomorphic, the 
claim follows.
\smallskip

\noindent{\bf Claim.} $G\in Aut\big(\Sigma^n(\Omega)\big)$.
\smallskip

\noindent{To see this, consider $H:\Sigma^n(\Omega)\lrarw\Sigma^n(\Omega)$ defined by
$H(X):=\chi\circ \Psi^{-1}\circ{\chi}^{-1}(\{X\})$ for all $X\in \Sigma^n(\Omega)$. Exactly the same argument as above shows that
$H(X)=\chi\circ \Psi^{-1}\circ\tau(X)$. From this and that $\chi\circ\tau=\mathbb{I}$ on $\Sigma^n(\Omega)$,
we get that $G\circ H=\mathbb{I}, \ H\circ G=\mathbb{I}$. This establishes that $G$ as defined in \eqref{E:autsym} 
is a holomorphic automorphism of $\Sigma^n(\Omega)$.}
\smallskip

Since $Aut(\Sigma^n(\Omega))$ satisfies the property $(\sf{P})$, as in the statement of 
Theorem~\ref{T:holautmatdom}, it is not difficult to see that there exists $\psi$, a 
holomorphic automorphism of $\Omega$, such that:
\[
G\circ\pi_n(z_1,\dots,z_n):=\pi_n\big(\psi(z_1),\dots,\psi(z_n)\big)
\]
for all $(z_1,\dots,z_n)\in\Omega^n$.
\smallskip

Now let $A\in\Sn$ be given and suppose $(\lam_1,\dots,\lam_n)$ and $(\mu_1,\dots,\mu_n)$ is a list of eigenvalues of $A$ and $\Psi(A)$ respectively,
repeated
according to their multiplicity as a zeros of characteristic polynomial.
Then from the definition of $G$ and $\chi$, we have $G\circ\pi_n(\lam_1,\dots,\lam_n)
=\pi_n(\mu_1,\dots,\mu_n)$. On the other hand from the equation above we get
 $G\circ\pi_n(\lam_1,\dots,\lam_n)=\pi_n(\psi(\lam_1),\dots,\psi(\lam_n))$. This implies 
 $\pi_n(\psi(\lam_1),\dots,\psi(\lam_n))=\pi_n(\mu_1,\dots,\mu_n)$. This in particular implies that
 $\sigma(\Psi(A))=\psi(\sigma(A))$. Since $A$ is arbitrary, this establishes the conclusion of our theorem.
\end{proof}

\subsection{A few more Preliminaries}
In this subsection, we shall gather a few more tools that are crucial to our proof of 
Theorem~\ref{T:holpropmatdom}. We shall present our proof of Theorem~\ref{T:holpropmatdom} in 
the next subsection.
\smallskip

Recall the construction of the function $f$ in the proof of Lemma~\ref{L:prespectra}. 
In particular, it shows that for any $A\in\Sn$ there exists an entire function $f$ into $\Sn$ such that $f(1)=A$ and $f(0)=D$,
where $D$ is a diagonal matrix such that $\chi(D)=\chi(A)$.
Using this property, we shall prove a proposition regarding the
pluri-complex Green function for the domain $\Sn$. Before we do this, let us first recall the pluri-complex Green function for a domain in $\C^n$.
\smallskip

Let $D\subset\C^n$ be a domain and let $P:=\{(p_j,m_j)\in D\times\mathbb{R}_{+}
: 1\leq j\leq N\}$ be a set of poles with $p_j\neq p_k$ when  $j\neq k$. Following Lelong
\cite{lel:fdplugre89}, we define the {\em pluri-complex Green function with poles in $P$} by:
\begin{align*}
g_{D}(P\,;\,w):=&\sup\big\{\nu(w): \text{$\nu\in PSH\big(D,[-\infty, 0)\big)$ and such that $\nu(z)-m_j\log||z-p_j||$}\\
                         & \text {is bounded from above in a neighborhood of $p_j$, $j=1,\dots, N$}\big\}
\end{align*}
Here, $PSH\big(D,[-\infty, 0)\big)$ denotes the set of all negative pluri-subharmonic functions on $D$.
In case $m_j=1$ for all $j$, we shall write $g_D(p_1,\dots,p_N\,;\bcdot)$ in place of $g_D(P\,;\,\bcdot)$.
Now we can state and prove the proposition alluded to in the previous paragraph regarding the 
pluri-complex Green function for $\Sn$.

\begin{proposition}\label{P:greentwo}
Let $A,B\in\Sn$. Then for any $D$, a diagonal matrix with $\chi(D)=\chi(B)$ we have:
\[
g_{\Sn}(A\,;B)=g_{\Sn}(A\,; D).
\]
\end{proposition}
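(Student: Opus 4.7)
The plan is to reduce the identity to the Liouville theorem for subharmonic functions on $\C$, by using the entire holomorphic curve $f : \C \to \Sn$ already constructed in the course of proving Lemma~\ref{L:prespectra}. Since $\chi(D) = \chi(B)$, I would first write $B = \exp(-C)\,(D+U)\,\exp(C)$ for some $C \in M_n(\C)$ and strictly upper triangular $U$, and set
\[
f(\zt) := \exp(-C\zt)\,(D + \zt\, U)\,\exp(C\zt), \qquad \zt \in \C.
\]
Exactly as in the proof of Lemma~\ref{L:prespectra}, $f$ is entire, $\chi(f(\zt)) = \chi(D + \zt U) = \chi(D)$ for every $\zt \in \C$ so that $f(\C) \subset \Sn$, and $f(0) = D$, $f(1) = B$.

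Next, I would consider $\nu : \C \to [-\infty,\,0]$ defined by $\nu(\zt) := g_{\Sn}\bigl(A\,;\,f(\zt)\bigr)$. The pluri-complex Green function $g_{\Sn}(A\,;\,\bcdot)$ is plurisubharmonic on $\Sn$ (being the supremum of a family of PSH functions uniformly bounded above by $0$), and $f$ is holomorphic; hence $\nu$ is subharmonic on $\C$ (in the generalized sense, allowing the value $-\infty$ on $f^{-1}\{A\}$ and possibly on a further polar set). Crucially, $\nu \leq 0$ everywhere.

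Finally, I would invoke the classical Liouville theorem for subharmonic functions on $\C$: any such function that is bounded above must be either a finite constant or identically $-\infty$. In either case $\nu(0) = \nu(1)$, which is precisely the required equality $g_{\Sn}(A\,;\,D) = g_{\Sn}(A\,;\,B)$.

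The plan is thus quite short; the only subtle point, which is not really an obstacle, is that $\nu$ may take the value $-\infty$ (e.g.\ if $A$ lies on the curve $f(\C)$). This is handled uniformly by the generalized Liouville statement above, so no serious difficulty remains.
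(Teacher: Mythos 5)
Your proposal is correct and follows essentially the same route as the paper: compose the pluri-complex Green function $g_{\Sn}(A\,;\,\bcdot)$ with the entire curve $f$ from the proof of Lemma~\ref{L:prespectra} joining $D$ to $B$, and apply the Liouville-type theorem for subharmonic functions on $\C$ that are bounded above. Your extra care about the value $-\infty$ is a minor refinement of the same argument, not a different approach.
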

\begin{proof}
As discussed above, let $f:\C\lrarw\Sn$ be a holomorphic map such that $f(0)=D$ and $f(1)=B$. Consider 
now $u:\C\lrarw [-\infty, 0)$ defined by $u(\zt):=g_{\Sn}(A\,;f(\zt))$. Then $u$ is a bounded subharmonic function
defined on $\C$. Hence $u$ has to be constant. In particular, $g_{\Sn}(A\,;B)=g_{\Sn}(A\,;f(1))=g_{\Sn}(A\,;f(0))=
g_{\Sn}(A\,;D)$.
\end{proof}

Let $\Omega\subset\C$ be a hyperconvex domain. By definition, 
there is a negative subharmonic exhaustion function $u$ on $\Omega$. In particular, $u$ 
is a non-constant, bounded above, subharmonic function on $\Omega$. This implies if we let $E=\C\setminus\Omega$ then $E\neq\emptyset$.
Moreover, $E$ cannot be a polar set and hence has to be uncountable (see, for instance, \cite[Chapter~3]{Ransford:pottheocomp}).
\smallskip

From the above discussion we see that given a hyperconvex domain $\Omega\subset\C$, and 
a positive integer $n,\,n\geq 2$, $\#(\C\setminus\Omega)\geq 2n$. Hence, $\Sigma^n(\Omega)$
is Kobayashi hyperbolic for every hyperconvex domain $\Omega$. We also wish to state that 
Zwonek in the article \cite[Proposition~11]{wz:ftpspcm20} proved that $\Sigma^n(\Omega)$ is 
hyperconvex if and only if $\Omega$ is hyperconvex. Putting all this together what we have is that 
if $\Omega$ is hyperconvex then for each $n\in\nat, n\geq 2$, the domains $\Sigma^n(\Omega)$
are Kobayashi hyperbolic and hyperconvex. 
\smallskip

Now we are in a position to present our proof of Theorem~\ref{T:holpropmatdom}.
\smallskip

\subsection{The proof of Theorem~\ref{T:holpropmatdom}}\label{SS:proofholpropmatdom}

\begin{proof}
We consider the function $G:\Sigma^n(\Omega)\lrarw\Sigma^n(\Omega)$ defined by
$G(X)=\chi\circ\Psi\circ\chi^{-1}\{X\}$. From the discussion above, we know 
that $\Omega$ satisfies the hypothesis as in Lemma~\ref{L:prespectra}. Hence by Lemma~\ref{L:prespectra},
$G$ is well defined. Exactly proceeding as in the proof of Theorem~\ref{T:holautmatdom},
we also see that $G$ is holomorphic.
\smallskip

\noindent{{\bf Claim.}} $G$ is a proper holomorphic self-map of $\Sigma^n(\Omega)$. 
\smallskip

\noindent{To establish 
this it will be sufficient to prove that if $\{X_\nu\}\subset\Sigma^n(\Omega)$ is a sequence,
having no limit points in $\Sigma^n(\Omega)$, then $\{G(X_\nu)\}$ has no limit points 
in $\Sigma^n(\Omega)$. Assume, on the contrary, a sequence $\{X_\nu\}\subset\Sigma^n(\Omega)$,
having no limit points in $\Sigma^n(\Omega)$, such that $\{G(X_\nu)\}$ has a limit point in 
$\Sigma^n(\Omega)$. This implies that there is a subsequence of $\{G(X_\nu)\}$, that we continue to
denote with $\{G(X_\nu)\}$, such that $\{G(X_\nu)\}$ converges to a point $X_0\in\Sigma^n(\Omega)$.}
\smallskip

Recall the map $\tau:\Sigma^n(\Omega)\lrarw\Sn$ as in the proof of Theorem~\ref{T:holautmatdom}.
Let $\{\lam_{1,\nu},\dots,\lam_{n,\nu}\}$ be a list of eigenvalues of $\Psi(\tau(X_\nu))$ repeated according
to their multiplicity as a zero of the characteristic polynomial of $\Psi(\tau(X_\nu))$. Then, using the 
property that $\chi\circ\tau\equiv\mathbb{I}$, we have:
\begin{equation}\label{E:eq1}
 G(X_\nu)=\chi\big(\Psi(\tau(X_\nu))\big)=\chi\big(\text{diag}[\lam_{1,\nu},\dots,\lam_{n,\nu}]\big)=
 \pi_n\big(\lam_{1,\nu},\dots,\lam_{n,\nu}\big)
 \end{equation}
where  $\text{diag}[\lam_{1,\nu},\dots,\lam_{n,\nu}]$ denotes the diagonal matrix with entries
$\lam_{j,\nu}$. Now using the properness of 
$\pi_n\big{|}_{\Omega^n}:\Omega^n\lrarw\Sigma^n(\Omega)$\,--\,and that $\{G(X_\nu)\}$ converges to $X_0\in\Sigma^n(\Omega)$\,--\,there exists a subsequence
of $\big\{\Lambda_\nu\in\Omega^n: \Lambda_\nu=(\lam_{1,\nu},\dots,\lam_{n,\nu})\big\}$, which we
continue to denote by $\{\Lambda_\nu\}$, such that $\{\Lambda_\nu\}$ converges to 
$\Lambda_0=(\lam_{1,0},\dots,\lam_{n,0})\in\Omega^n$. This, owing to equation~\eqref{E:eq1} above, implies:
\[
 \pi_n\big(\lam_{1,0},\dots,\lam_{n,0}\big)=
  \chi\big(\text{diag}[\lam_{1,0},\dots,\lam_{n,0}]\big)=X_0.\vspace{0.2cm}
  \]
  
Let $A\in\Sn$ be such that $A$ is not a critical value of $\Psi$. Let $N$ be the multiplicity of $\Psi$
and suppose $\big\{B_1,\dots,B_N\big\}=\Psi^{-1}\{A\}$.   
 The upper semicontinuity of $g_{\Sn}$ implies 
\begin{equation}\label{E:upsem}
g_{\Sn}\big(A\,;\,\text{diag}[\lam_{1,0},\dots,\lam_{n,0}]\big)
\geq \lim_{\nu\to\infty}g_{\Sn}\big(A\,;\,\text{diag}[\lam_{1,\nu},\dots,\lam_{n,\nu}]\big).
\end{equation}
Now by Proposition~\ref{P:greentwo}, and the behaviour of Green function under proper holomorphic 
mappings (see \cite[Theorem~1.2]{ae&wz:invplurigreen98}) we get:
\begin{align}\label{E:propgreen}
g_{\Sn}\big(A\,;\,\text{diag}[\lam_{1,\nu},\dots,\lam_{n,\nu}]\big)
&=g_{\Sn}\big(A\,;\,\Psi(\tau(X_\nu))\big)\nonumber\\
&=g_{\Sn}\big(B_1,\dots,B_N\,;\,\tau(X_\nu)\big)\nonumber\\
&\geq\sum_{j=1}^Ng_{\Sn}\big(B_j\,;\,\tau(X_\nu)\big)
\geq \sum_{j=1}^N g_{\Sigma^n(\Omega)}\big(\chi(B_j)\,;\,X_\nu\big).
\end{align}
Since $\Sigma^n(\Omega)$ is hyperconvex and $\{X_\nu\}$ is a sequence that does not have a 
limit point in $\Sigma^n(\Omega)$, we have 
$\lim_{\nu\to\infty}g_{\Sigma^n(\Omega)}\big(\chi(B_j)\,;\,X_\nu\big)\to 0$ for every $j, 1\leq j\leq N$;
see e.g. \cite{lel:fdplugre89}.
From this, inequality~\eqref{E:propgreen} and \eqref{E:upsem} it follows that 
$g_{\Sn}\big(A\,;\,\text{diag}[\lam_{1,0},\dots,\lam_{n,0}]\big)=0$, which is a contradiction. Hence
our assumption that $\{G(X_\nu)\}$ has a subsequence that converges in $\Sigma^n(\Omega)$ is false.
This establishes that $G$ is a proper holomorphic self-map of $\Sigma^n(\Omega)$.
\smallskip

Now since the proper holomorphic maps on $\Sigma^n(\Omega)$ satisfy the property $(\sf{P})$, there exists $\psi$, a proper holomorphic 
self-map of $\Omega$, such that $G=\Sigma^n\psi$. Using this and proceeding similarly as in the last
paragraph in the proof of Theorem~\ref{T:holautmatdom}, we get the desired result. 
\end{proof}

\section{A brief survey of holomorphic functional calculus}\label{S:holo_fc}
A very essential part of our proofs of Theorem~\ref{T:twopointT} and Theorem~\ref{T:3pt_nec} below
is the ability, given a domain $\Omega\subset\C$ and a matrix
$A\in \Sn$, to define $f(A)$ in a meaningful way for each $f\in\hol(\Omega)$. Most readers will be aware that this is what is known as the holomorphic functional calculus.
We briefly recapitulate the holomorphic functional calculus and its basic properties in a setting
which will be relevant to our proofs in the coming sections.
\smallskip

Throughout this section, $\mathfrak{X}$ will denote a finite dimensional complex Banach space and $T$
a linear operator in $\mathcal{B}(\mathfrak{X}):=\text{the set of all bounded linear operators on} 
\ \mathfrak{X}$. The symbol $\I$ will denote the identity operator and we will interpret $T^{0}=\I$.
In what follows, we shall denote by $\C[t]$ the set of all polynomials with complex coefficients in the 
indeterminate $t$. 
Given a polynomial $P\in\C[t]$, if we write $P(t):=\sum_{i=0}^n \alpha_{i}\,t^i$ with 
$\alpha_i\in\C$, then by $P(T)$ we will mean the sum $\sum_{i=0}^n\alpha_i\,T^i$.
\smallskip

For a fix $T\in\mathcal{B}(\mathfrak{X})$ and $\lam\in\C$, we consider the set
$\big\{(\lam\I-T)^j\,:\,j\in\nat\big\}$. If $\lam\notin\sigma(T)$, we notice that $\text{Ker}(\lam\I-T)^j=\{0\}$
for each $j\in\nat$. On the other hand if $\lam\in\sigma(T)$ and if we define $V^j_\lam:=\text{Ker}(\lam\I-T)^j$
then we have $\{0\}\subseteq V^1_\lam\subseteq V^2_\lam\subseteq\dots\subseteq V^j_\lam\subseteq\dots$.
Since $\mathfrak{X}$ is finite dimensional, there is a $k$, $k\leq \text{dim}(\mathfrak{X})$, such that $V^k_\lam=
V^{k+1}_\lam=V^j_\lam$ for all $j\geq k+1$.

\begin{definition}\label{D:index}
Let $T\in\mathcal{B}(\mathfrak{X})$ and let $\lam\in\C$. Then the {\em index of $\lam$},
$m(\lam)$, is defined by:
\[
m(\lam):=\min\big\{j\in\nat: \text{Ker}(\lam\I-T)^j=\text{Ker}(\lam\I-T)^{j+1}\big\}.
\]
\end{definition}
\noindent{Notice that $m(\lam)=0$ if and only if $\lam\notin\sigma(T)$.
A question arises at this point: given polynomials $P,\,Q\in\C[t]$, and $T\in\mathcal{B}(\mathfrak{X})$,
when $P(T)=Q(T)$? We state a very important result that among other things primarily 
provides answer to this question:}
\begin{result}\cite[Chapter~7, Section~1]{dunfordsch:Linopera88}\label{Res:keyresholfunc}
Let $P,\,Q\in\C[t]$, and let
$T\in\mathcal{B}(\mathfrak{X})$. Then we have $P(T)=Q(T)$ if and only if each $\lam\in\sigma(T)$
is a zero of $P-Q$ of order $m(\lam)$.
\end{result}
\noindent{We refer the reader to \cite[Chapter~7, Section~1]{dunfordsch:Linopera88} for a proof of this result. 
The proof as given in \cite{dunfordsch:Linopera88} also shows that $m(\lam)$ is the multiplicity of $\lam$
as a zero of the minimal polynomial of $T$.}
\smallskip

Given $T\in\mathcal{B}(\mathfrak{X})$, let $\mathcal{F}(T)$ be the set of all holomorphic functions in a 
neighbourhood of $\sigma(T)$. Given $f\in\mathcal{F}(T)$, we define:
\[
f(T):=P(T), \ \text{where $P\in\C[t]$ with $f^{(j)}(\lam)=P^{(j)}(\lam)$, $0\leq j\leq m(\lam)-1$,
 $\forall\lam\in\sigma(T)$}.
 \] 
Here, $f^{(j)},\,P^{(j)}$ denotes the $j$-th derivative of $f$ and $P$ respectively.
 It follows from Result~\ref{Res:keyresholfunc} that the definition above is unambiguous and the function 
$\Theta_T:\mathcal{F}(T)\lrarw\mathcal{B}(\mathfrak{X})$ defined by $\Theta_T(f):=f(T)$ has following 
properties: if $f,g\in\mathcal{F}(T)$ and $\alpha, \beta\in\C$ then:\vspace{0.15cm}
\begin{itemize}
\item $\alpha f+\beta g\in\mathcal{F}(T)$, and $\Theta_T(\alpha f+\beta g)=\alpha\,\Theta_T(f)+
\beta\,\Theta_T(g)$, 
\vspace{0.1cm}

\item $fg\in\mathcal{F}(T)$ and $\Theta_T(fg)=\Theta_T(f)\,\Theta_T(g)$,
\vspace{0.1cm}

\item $\sigma(\Theta_T(f))=\sigma(f(T))=f(\sigma(T))$.\vspace{0.15cm}
\end{itemize}
The last property above is called the Spectral Mapping Property in literature. We also note that from the second property above, it follows that $f(T)g(T)=g(T)f(T)$ for all $f,g\in\mathcal{F}(T)$.
\smallskip

Given $p\in\C$, let $e_p(\bcdot)$ be a function that is identically equal to $1$ in a neighbourhood of $p$,
and identically equal to $0$ in a neighbourhood of each point of $\sigma(T)\cap{(\C\setminus\{p\})}$. Set 
$E(p)=e_p(T)$. Then we have following:

\begin{result}\cite[Chapter~7, Section~1]{dunfordsch:Linopera88}\label{Res:orthdeco}
Given $T\in\mathcal{B}(\mathfrak{X})$ and $p\in\C$, let $E(p)\in\mathcal{B}(\mathfrak{X})$ be as defined
above then we have:
\begin{itemize}
\item $E(p)\neq 0$ if and only if $p\in\sigma(T)$.
\vspace{0.1cm}
\item $E(p)^2=E(p)$ and $E(p)E(q)=0$ for $p\neq q$.
\vspace{0.1cm}
\item $\I=\sum_{p\in\sigma(T)} E(p)$.
\end{itemize}
\end{result}

\begin{remark}\label{Rm:orthdecoX}
If $\{\lam_1,\dots,\lam_k\}$ be an enumeration of $\sigma(T)$, and let $\mathfrak{X}_i=E(\lam_i)\mathfrak{X}$.
Result~\ref{Res:orthdeco} implies that
\[
\mathfrak{X}=\mathfrak{X}_1\oplus\dots\oplus\mathfrak{X}_k.
\]
Moreover, since $TE(\lam_i)=E(\lam_i)T$, it follows that $T\mathfrak{X}_i\subseteq\mathfrak{X}_i$, $i=1,\dots,
k$. Thus, to the decomposition of the spectrum $\sigma(T)$ into $k$ points there corresponds a direct sum 
decomposition of $\mathfrak{X}$ into $k$ invariant subspaces of $T$. Thus the study of the action of $T$ on $\mathfrak{X}$ may be reduced to the study of the action of $T$ on each of the subspaces $\mathfrak{X}_i$.
\end{remark}

\begin{remark}\label{Rm:nilpotency}
If we restrict $T$ on $\mathfrak{X_i}$ and write $T=\lam_i\,\I+(T-\lam_i\,\I)$ on $\mathfrak{X_i}$. Then
since the holomorphic function $(t-\lam_i)^{m(\lam_i)}e_{\lam_i}(t)$ has a zero of order $m(\lam_i)$ at each point
of $\sigma(T)$, we have $(T-\lam_i\I)^{m(\lam_i)}\,E(\lam_i)=0$. Clearly 
$(T-\lam_i\I)^{m(\lam_i)-1}\,E(\lam_i)\neq 0$. This shows that restricted to $\mathfrak{X}_i$,
the operator $T-\lam_i\,\I$ is a nilpotent operator of order $m(\lam_i)$. Thus,
in each space $\mathfrak{X_i}$, the operator  $T$ is the sum of a scalar multiple $\lam_i\,\I$ of the
identity and a nilpotent operator $T-\lam_i\I$ of order $m(\lam_i)$.
\end{remark}

We now state a result that gives an explicit formula for computing $f(T)$ in terms of the projections
$E(\lam)$, $\lam\in\sigma(T)$.
\begin{result}\label{Res:fomlfuncal}
Let $T\in\mathcal{B}(\mathfrak{X})$ and let $f\in\mathcal{F}(T)$ be given. Then
\begin{equation}\label{E:fomlfuncal}
f(T)=\sum_{\lam\in\sigma(T)}\sum_{j=0}^{m(\lam)-1}
\intf{}{}{(T-\lam\I)^j}{j!}\,f^{(j)}(\lam)E(\lam).
\end{equation}
\end{result}
\noindent{The formula \eqref{E:fomlfuncal} follows immediately from the properties of the function $\Theta_{T}$ described 
above; once we observe the following: given $f\in\mathcal{F}(T)$, consider the function
$g\in\mathcal{F}(T)$ defined by:
\[
g(t):=\sum_{\lam\in\sigma(T)}\sum_{j=0}^{m(\lam)-1}
\intf{}{}{(t-\lam)^j}{j!}\,f^{(j)}(\lam)e_{\lam}(t)\, ,
\]
then we have $f^{(i)}(\lam)=g^{(i)}(\lam),\,i\leq\,m(\lam)-1$, for $\lam\in\sigma(T)$.}

\section{The proof of Theorem~\ref{T:twopointT}}\label{S:twopointT}
We shall present our proof of Theorem~\ref{T:twopointT} in this section. 
Before we present our proof, we shall need certain complex analytic tools
related to the spectral unit ball. We first discuss them in the next subsection. 

\subsection{Complex analytic properties of the spectral unit ball}\label{ss:capsub}
For $n \in \Z_+$, recall the spectral unit ball, $\OM_n \subset \C^{n^2}$, is the
collection of all matrices $A \in M_n(\C)$ whose
spectrum $\sigma(A)$ is contained in $\D$. We also recall the definition of spectral
radius $\rho$ of a matrix $A$ defined by $\rho(A):=\max\big\{|\,\lam\,|:\lam\in\sigma(A)\big\}$.
We first state the result:

\begin{result}[Janardhanan, \cite{jj:slpgf20}]\label{Res:jjslpgf}
The spectral unit ball, $\OM_n$, is an unbounded, balanced, pseudo-convex domain with Minkowski 
function given by the spectral radius $\rho$.
\end{result}
\noindent{It is a fact that the Minkowski function of a balanced pseudo-convex domain is pluri-subharmonic
(see \cite[Appendix B.7.6]{JP13:jarnicki2013invariant}). Hence, 
it follows from this result that $\rho|_{\OM_n}$ is
pluri-subharmonic. The pluri-subharmonicity of spectral radius function also follows from another
important result due to Vesentini \cite{vst:shSprd68}.} This latter result regarding the spectral radius function
is for a general Banach algebra.
\smallskip

We now state an important lemma for holomorphic functions in $\hol(\D,\,\Omega_n)$. 
This lemma could be considered as a generalization of 
the Schwarz lemma for holomorphic functions in $\hol(\D,\,\D)$. 
\begin{lemma}\label{L:fl2}
Let $F\in\hol(\D,\,\Omega_n)$ be such that $F(0)=0$. Then there exists
$G\in\hol(\D,\,\overline{\Omega}_n)$ such that $F(\zt)\,=\,\zt\,G(\zt)$ for all $\zt\in\D$.
In particular, we have $\rho(F(\zt))\leq |\zt|$ for all $\zt\in\D$.
\end{lemma}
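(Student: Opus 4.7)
The plan is a Schwarz-lemma style argument where the Euclidean norm is replaced by the spectral radius $\rho$, exploiting the plurisubharmonicity of $\rho$ on $M_n(\C)$.

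\textbf{Factoring out $\zt$.} Since $F(0)=0$ and $F$ is a holomorphic map $\D\lrarw M_n(\C)\cong\C^{n^2}$, every entry of $F$ vanishes at the origin. I would define $G:\D\lrarw M_n(\C)$ by $G(\zt):=\zt^{-1}F(\zt)$ for $\zt\neq 0$ and $G(0):=F'(0)$. Then $G\in\hol(\D,M_n(\C))$ with $F(\zt)=\zt\,G(\zt)$, so the lemma reduces to showing $\rho(G(\zt))\leq 1$ for every $\zt\in\D$.

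\textbf{Subharmonicity of $\rho\circ G$.} Result~\ref{Res:jjslpgf} (together with the fact that the Minkowski function of a balanced pseudoconvex domain is plurisubharmonic, or directly Vesentini's theorem on the spectral radius in a Banach algebra) yields that $\rho$ is plurisubharmonic on the whole of $M_n(\C)$. Composing with the holomorphic map $G$, the function $\zt\mapsto\rho(G(\zt))$ is subharmonic and continuous on $\D$.

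\textbf{Maximum principle and homogeneity.} Fix $r\in(0,1)$. For $|\zt|=r$, the homogeneity $\rho(\alpha A)=|\alpha|\rho(A)$ gives
\[
\rho(G(\zt))=\frac{\rho(F(\zt))}{r}\leq\frac{1}{r},
\]
because $F(\zt)\in\OM_n$ forces $\rho(F(\zt))<1$. Applying the maximum principle for subharmonic functions to $\rho\circ G$ on the closed disc $\{|\zt|\leq r\}$ propagates this to $\rho(G(\zt))\leq 1/r$ for every $\zt$ in that disc. Fixing any $\zt\in\D$ and letting $r\to 1^-$ yields $\rho(G(\zt))\leq 1$, hence $G\in\hol(\D,\overline{\OM}_n)$. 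The inequality $\rho(F(\zt))\leq|\zt|$ then follows immediately from $F(\zt)=\zt\,G(\zt)$, with equality at $\zt=0$.

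The only nontrivial ingredient is the plurisubharmonicity of $\rho$ on the full matrix space $M_n(\C)$, which I expect to be the crux of the argument; however it is already on record from Result~\ref{Res:jjslpgf} and the Vesentini citation in the preceding subsection, so the remainder is a routine maximum-principle computation.
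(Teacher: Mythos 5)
Your proof is correct and is exactly the argument the paper has in mind: the paper omits the proof of this lemma (deferring to \cite[Lemma~4.3]{chandel:3spintp20}) but explicitly names the plurisubharmonicity of $\rho$ as the sole ingredient, and your factor-out-$\zt$, subharmonize-$\rho\circ G$, maximum-principle-and-let-$r\to 1^-$ argument is the standard Schwarz-lemma adaptation that this ingredient supports. The only point worth stating explicitly is the identification $\overline{\OM}_n=\{A:\rho(A)\leq 1\}$ (clear, since $tA\to A$ as $t\to 1^-$ with $\rho(tA)<1$), which you use implicitly when concluding $G\in\hol(\D,\overline{\OM}_n)$.
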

\noindent{The lemma is a consequence of the fact that $\rho|_{\OM_n}$ is
pluri-subharmonic. We do not wish to present a proof of the above lemma here;
we refer the interested reader to \cite[Lemma~4.3]{chandel:3spintp20} for a proof.
We also wish to state another another lemma which is a consequence of the 
pluri-subharmonicity of the spectral radius function. }
\begin{lemma}\label{L:fl1}
Let $\Phi\in\hol(\D,\,\overline{\Omega}_n)$ be such that there exists a
$\theta_0\in\mathbb{R}$ and $\zt_0\in\D$ satisfying $e^{i\theta_0}\in\sigma(\Phi(\zt_0))$.
Then $e^{i\theta_0}\in\sigma(\Phi(\zt))$ for all $\zt\in\D$.
\end{lemma}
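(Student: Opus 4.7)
The approach I would take is to show that the holomorphic function
\[
f(\zt) \, := \, \det\bigl(e^{i\theta_0} I - \Phi(\zt)\bigr)
\]
vanishes identically on $\D$; this is equivalent to the stated conclusion. Since $f$ is holomorphic (the coefficients of the characteristic polynomial are polynomials in the entries of $\Phi$, and $\Phi$ is holomorphic) and $f(\zt_0)=0$ by hypothesis, the identity principle for holomorphic functions reduces the problem to showing that $f$ vanishes on some open subset of $\D$ containing $\zt_0$.

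The main idea is to localize to those eigenvalues of $\Phi(\zt)$ that lie near $e^{i\theta_0}$. Let $k\geq 1$ be the algebraic multiplicity of $e^{i\theta_0}$ as an eigenvalue of $\Phi(\zt_0)$. First I would choose a small positively-oriented circle $\gamma\subset\C$ centred at $e^{i\theta_0}$ whose closed disc contains no other eigenvalues of $\Phi(\zt_0)$. By continuity of the spectrum, there is a connected neighbourhood $U$ of $\zt_0$ in $\D$ such that for every $\zt\in U$, exactly $k$ eigenvalues (counted with algebraic multiplicity) $\mu_1(\zt),\dots,\mu_k(\zt)$ of $\Phi(\zt)$ lie inside $\gamma$, and none on $\gamma$. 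Writing $p_\zt(t)=\det(tI-\Phi(\zt))$, the argument principle / residue calculus gives
\[
s(\zt) \, := \, \sum_{j=1}^{k}\mu_j(\zt) \, = \, \frac{1}{2\pi i}\oint_\gamma t\,\frac{p_\zt'(t)}{p_\zt(t)}\,dt,
\]
which, by holomorphy of the integrand in $\zt$ on $U$ and compactness of $\gamma$, is a holomorphic function of $\zt$ on $U$.

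At $\zt_0$ we have $s(\zt_0)=k\,e^{i\theta_0}$, so $|s(\zt_0)|=k$. On the other hand, since $\Phi(\zt)\in\overline{\OM}_n$ implies $\sigma(\Phi(\zt))\subseteq\overline{\D}$, each $|\mu_j(\zt)|\leq 1$, and hence $|s(\zt)|\leq k$ on $U$. By the maximum modulus principle applied to the holomorphic function $s$, it follows that $s\equiv k\,e^{i\theta_0}$ on $U$. The equality case in $|\sum_j \mu_j(\zt)|\leq\sum_j |\mu_j(\zt)|\leq k$ then forces $\mu_j(\zt)=e^{i\theta_0}$ for every $j$ and every $\zt\in U$. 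Thus $e^{i\theta_0}\in\sigma(\Phi(\zt))$ for all $\zt\in U$, so $f$ vanishes on $U$, and by the identity principle $f\equiv 0$ on $\D$.

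The main obstacle, in my view, is the gap between the easy observation that \emph{some} unimodular eigenvalue persists (which follows by applying the subharmonic maximum principle to $\rho\circ\Phi$, using that $\rho|_{\OM_n}$ is plurisubharmonic) and the stronger claim that the specific eigenvalue $e^{i\theta_0}$ persists. The contour-integral trick above resolves this by isolating just those eigenvalues that perturb from $e^{i\theta_0}$ and turning the question into a genuinely holomorphic maximum-modulus statement on their sum, which is rigid enough to pin down each $\mu_j(\zt)$ exactly at $e^{i\theta_0}$.
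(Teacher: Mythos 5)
Your argument is correct, but it takes a genuinely different route from the one the paper relies on. The paper does not prove this lemma in the text: it records that the statement ``follows from the pluri-subharmonicity of the spectral radius function'' and points to Section~4 of \cite{chandel:3spintp20}. The standard proof along those lines is a one-step maximum-principle argument: for $A$ with $\sigma(A)\subset\overline{\D}$, the eigenvalues of $\tfrac12(\mathbb{I}+e^{-i\theta_0}A)$ are $\tfrac12(1+e^{-i\theta_0}\lambda)$, $\lambda\in\sigma(A)$, so $\rho\big(\tfrac12(\mathbb{I}+e^{-i\theta_0}A)\big)\leq 1$ with equality if and only if $e^{i\theta_0}\in\sigma(A)$; applying the maximum principle to the subharmonic function $\zt\mapsto\rho\big(\tfrac12(\mathbb{I}+e^{-i\theta_0}\Phi(\zt))\big)$ (subharmonicity of $\rho$ along holomorphic maps being Vesentini's theorem, quoted after Result~\ref{Res:jjslpgf}) yields the conclusion on all of $\D$ at once. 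Your proof instead replaces Vesentini's theorem by the holomorphy of the localized eigenvalue sum $s(\zt)=\frac{1}{2\pi i}\oint_\gamma t\,p_\zt'(t)/p_\zt(t)\,dt$ together with the ordinary maximum modulus principle, and then globalizes via the identity principle applied to $\det(e^{i\theta_0}\mathbb{I}-\Phi(\zt))$. Every step checks out: the number of eigenvalues inside $\gamma$ is locally constant by the argument principle, $s$ is holomorphic on $U$, $|s|\leq k$ with $|s(\zt_0)|=k$, and the equality case of the triangle inequality pins all $k$ localized eigenvalues at $e^{i\theta_0}$ (you correctly use only the sum, so the possible non-holomorphy of the individual eigenvalue branches is irrelevant). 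What your approach buys is independence from the subharmonicity of the spectral radius, a nontrivial external input; what it costs is the localization and identity-principle scaffolding that the subharmonic argument dispenses with.
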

\noindent{Lemma~\ref{L:fl1} is not needed in the proof of Theorem~\ref{T:twopointT} although it will be 
an important tool in the proof of Theorem~\ref{T:3pt_nec}. We stated it here since it follows from the 
pluri-subharmonicity of spectral radius function; see e.g. \cite[Section~4]{chandel:3spintp20}. We are now 
ready to present our proof of Theorem~\ref{T:twopointT}.}
\smallskip

\subsection{The proof of Theorem~\ref{T:twopointT}}\label{SS:twopointT}
\begin{proof}
Let $F\in\hol(\D,\,\Sn)$ be such that $F(\zt_j)=W_j$, $j=1,2$. For each $k\in\{1,2\}$, consider
$\Phi_k\in\hol(\D,\,\Sn)$ defined by:
\begin{equation}\label{E:preautcom}
  \Phi_k(\zt)=F\circ\psi^{-1}_{k}(\zt) \ \ \  \forall\zt\in\D.\vspace{0.1cm}
   \end{equation}
Here, $\psi_k$ is
the automorphism $\psi_k(\zt):=(\zt-\zt_k)(1-\overline{\zt_k}\zt)^{-1}$, $\zt\in\D$, of $\D$. 
Then $\Phi_k(0)=W_k$ and $\Phi_k(\psi_{k}(\zt_j))=W_j$, $j\neq k$. Now for an arbitrary 
but fixed $z\in\Omega\setminus\sigma(W_k)$, consider $B_{k,\,z}\in\hol(\Omega,\,\D)$ defined by:
\begin{equation}\label{E:anablash}
 B_{k,\,z}(\bcdot):=\prod_{\lambda\in\sigma(W_k)}G_{\Omega}(\lambda,\,z;\,\bcdot)^{m(\lambda)}.
 \vspace{0.1cm}
\end{equation}
Observe that $ B_{k,\,z}(\bcdot)=B(W_k,\,z;\,\bcdot)$ is as in Definition~\ref{Def:genminblashprod}.
\smallskip

As $B_{k,\,z}\in\hol(\Omega)$, it induces\,---\,via the holomorphic functional calculus\,---\,a map (which we continue to denote by $B_{k,\,z}$) from $\Sn$ to $M_n(\C)$. The Spectral Mapping Theorem tells us that 
$\sigma(B_{k,\,z}(X))=B_{k,\,z}(\sigma(X))\subset\D$ for every $X\in\Sn$. Hence 
$B_{k,\,z}(X)\subset\Omega_n$ for every $X\in\Sn$.
\smallskip

\noindent{\bf Claim.} $B_{k,\,z}(\Phi_k(0))=0$.
\smallskip

\noindent To see this we write:\vspace{0.2cm}
\[
B_{k,\,z}(\zt)=\Big(\prod\nolimits_{\lambda\in\sigma(W_k)}(\zt-\lambda)^{m(\lambda)}\Big)\,g_{z}(\zt)
\vspace{0.2cm}
\]
for some $g_z\in\hol(\Omega)$ and for every $\zt\in\Omega$. Hence, since\,---\,by the holomorphic 
functional calculus\,---\,the assignment $f\longmapsto f(\Phi_k(0)), f\in\hol(\Omega)$, is multiplicative, 
as discussed in Section~\ref{S:holo_fc}, we get
\[
 B_{k,\,z}(\Phi_k(0))=\Big(\prod\nolimits_{\lambda\in\sigma(W_k)}(\Phi_k(0)-\lambda\,\mathbb{I})^{m(\lambda)}\Big)\,g_z(\Phi_k(0)).\vspace{0.2cm}
\]
Now since the minimal polynomial for $W_k=\Phi_k(0)$ is given by $\prod\nolimits_{\lambda\in\sigma(W_k)
}(t-\lambda)^{m(\lambda)}$, we see that the product term in the right hand side of the above equation is
zero, whence the claim.
\smallskip

Consider the map $\Psi_{k,\,z}$ defined by:
\[
\Psi_{k,\,z}(\zt):=B_{k,\,z}\circ\Phi_k(\zt), \ \zt\in\D.
\]
It is a fact that $\Psi_{k,\,z}\in\hol(\D)$. Moreover, from the above claim and the discussion just before it,
we have $\Psi_{k,\,z}\in\hol(\D,\,\Omega_n)$ with $\Psi_{k,\,z}(0)=0$. By Lemma~\ref{L:fl2}, we get that
\[
\rho(\Psi_{k,\,z}(\zt))\leq |\zt| \ \ \ \forall\zt\in\D.
\]
Now from the definition of $\Psi_{k,\,z}$ and from the Spectral Mapping Theorem we get 
$\sigma(\Psi_{k,\,z}(\zt))=\sigma(B_{k,\,z}(\Phi_k(\zt)))=B_{k,\,z}(\sigma(\Phi_k(\zt)))$. This together with the above equation 
gives us:
\[
|\,B_{k,\,z}(\mu)\,|\leq |\,\zt\,| \ \ \ \forall\zt\in\D \ \text{and} \ \mu\in\sigma(\Phi_k(\zt)).
\]
We put $\zt=\psi_{k}(\zt_j)$ in the above equation. Then, since $\Phi_k(\psi_{k}(\zt_j))=W_j$, by the 
above equation and \eqref{E:anablash}, we get:
\begin{equation}\label{E:prodcaraext}
|\,B_{k,\,z}(\mu)\,|\,=\,\prod_{\lambda\in\sigma(W_k)}|\,G_{\Omega}(\lambda,\,z;\,\mu)\,|^{m(\lambda)} \leq |\,\psi_{k}(\zt_j)\,|
\ \ \ \forall\mu\in\sigma(W_j).
\end{equation}
Since $z$ is arbitrary, for an arbitrary but fixed $\mu\in\sigma(W_j)$, we can take $z=\mu$ in the above 
equation. This with the observation that $G_{\Omega}(\lambda,\,\mu;\,\mu)=C_{\Omega}(\lambda,\mu)$
gives us that:\vspace{0.2cm}
\begin{equation}\label{E:oneineq}
\prod_{\lambda\in\sigma(W_k)}\,{C_{\Omega}(\lam,\mu)}^{m(\lam)} \ 
\leq \ \hyper{\zt_1}{\zt_2} \ \ \ \forall\mu\in\sigma(W_j).
\vspace{0.2cm}
\end{equation}
Interchanging the roles of $j$ and $k$, in the above discussion will give an inequality 
of the form \eqref{E:oneineq} with $j$ and $k$ interchanged.
The inequality \eqref{E:2SchwarzIneq} will follow from these two inequalities. 
\end{proof}

\section{Minimal polynomials under holomorphic functional calculus}\label{S:compminipoly}
In this section, we develop the key matricial tool needed in establishing Theorem~\ref{T:3pt_nec}, which is 
the computation of the minimal polynomial for $f(A)$, given $f\in\hol(\Omega)$ and
$A\in\Sn$, $n\geq 2$. 
This is the content of Theorem~\ref{T:minpo_holo_func_anal} below.
In what follows, given integers $p < q$, $\intgR{p}{q}$
will denote the set of
integers $\{p, p+1,\dots, q\}$. Given $A\in M_{n}(\C)$, we will
denote its minimal polynomial by $\minpo{A}$. We begin with a lemma:
\smallskip

\begin{lemma}[Chandel, \cite{chandel:3spintp20}]\label{L:minmo_lincomb_nilpo}
Let $(\alpha_0,\alpha_1,\dots,\alpha_{n-1})\in\C^{n}$, $n\geq 2$. Let 
$A\,=\,\sum_{j=0}^{n-1}\alpha_jN^{j}$, where $N$ is the
nilpotent operator of degree $n$. Then the minimal polynomial for $A$ is given by:
\begin{equation}\label{E:minpoeq_lincomb_nilpo}
\minpo{A}(t)\,=\,(t-\alpha_{0})^{[(n-1)/{l(\alpha_1,\alpha_2,\dots,\alpha_{n-1})}]+1}.
\end{equation}
\end{lemma}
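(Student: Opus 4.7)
The plan is to interpret $l=l(\alpha_1,\dots,\alpha_{n-1})$ as the smallest index $j\in\intgR{1}{n-1}$ with $\alpha_j\neq 0$, and then analyze the operator $A-\alpha_0\I$ directly as a product of a pure power of $N$ with an invertible operator. First I would factor
\[
A-\alpha_0\,\I \;=\; N^{l}\bigl(\alpha_l\,\I+\alpha_{l+1}N+\cdots+\alpha_{n-1}N^{\,n-1-l}\bigr)\;=:\;N^{l}\,M.
\]
Since $M$ is a polynomial in $N$, it commutes with $N$; and since $\alpha_l\neq 0$ while $M-\alpha_l\I$ is nilpotent, $M$ is invertible with $M^{-1}$ itself a polynomial in $N$ (a short geometric-series argument).

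Using this commutativity, $(A-\alpha_0\I)^{k}=N^{lk}\,M^{k}$ for every $k\in\nat$, and the invertibility of $M^{k}$ reduces the question ``for which $k$ does $(A-\alpha_0\I)^{k}$ vanish?'' to ``for which $k$ is $N^{lk}=0$?''. Because $N$ is nilpotent of degree exactly $n$, the latter holds precisely when $lk\geq n$, so the smallest such positive integer $k$ is $\lceil n/l\rceil$. A short arithmetic check, splitting on whether $l$ divides $n$, gives $\lceil n/l\rceil=[(n-1)/l]+1$.

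To finish, I need only conclude that $\minpo{A}(t)$ is exactly $(t-\alpha_0)^{[(n-1)/l]+1}$. This is immediate: the factorization above shows $A-\alpha_0\I$ is nilpotent, so $\alpha_0$ is the unique eigenvalue of $A$ and $\minpo{A}$ must be a power of $(t-\alpha_0)$; the exponent is by definition the least $k\geq 1$ for which $(A-\alpha_0\I)^{k}=0$, and that is $[(n-1)/l]+1$ by the preceding step. I do not foresee a substantive obstacle: the only mildly delicate checks are the invertibility of $M$ and the ceiling identity just mentioned, both of which are one-liners. The edge case $\alpha_1=\cdots=\alpha_{n-1}=0$ collapses to $A=\alpha_0\,\I$ and can be treated separately, or absorbed by the convention that renders the exponent equal to $1$.
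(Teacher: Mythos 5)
Your argument is correct: the factorization $A-\alpha_0\I=N^{l}M$ with $M$ an invertible polynomial in $N$ cleanly reduces the nilpotency index of $A-\alpha_0\I$ to that of $N^{l}$, the ceiling identity $\lceil n/l\rceil=[(n-1)/l]+1$ checks out in both residue cases, and the degenerate case $l=n$ is handled. Note that the paper does not reproduce a proof of this lemma at all\,---\,it defers to Lemma~3.1 of the cited reference\,---\,so there is no internal argument to compare against; your proof is a valid, self-contained justification, essentially equivalent to the standard computation that identifies the lowest-order surviving term $\alpha_l^{k}N^{lk}$ in $(A-\alpha_0\I)^{k}$.
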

\noindent{Here, $[\bcdot]$ denotes the greatest integer function and $l(\alpha_1,\alpha_2,\dots,\alpha_{n-1})$ is defined by:
\[
l(\alpha_1,\alpha_2,\dots,\alpha_{n-1})\,:=\,
\begin{cases}
n, &\text{if $\alpha_j = 0 \ \forall j\in\intgR{1}{n-1}$},\\
\min\{j\in \intgR{1}{n-1}\,:\,\alpha_j\neq 0\}, &\text{otherwise}.
\end{cases}
\]}
The reader is referred to \cite[Lemma~3.1]{chandel:3spintp20} for a proof of this lemma.
\smallskip

Given $a\in \C$ and $g$ a holomorphic function in a neighbourhood of $a$,
$\mathsf{ord}_{a}g$ will denote the order of vanishing of $g$ at $a$, as defined after the statement 
of Theorem~\ref{T:3pt_nec}. We now present the main result of this section.

\begin{theorem}\label{T:minpo_holo_func_anal}
Let $A\in\Sn$, $n\geq 2$, and let $f\in\hol(\Omega)$ be a non-constant function. Suppose that the minimal
polynomial for $A$ is given by
\[
\minpo{A}(t)\,=\,\prod_{\lambda\in\sigma(A)}(t-\lambda)^{m(\lambda)}.
\]
Then the minimal polynomial for $f(A)$ is given by
\[
\minpo{f(A)}(t)\,=\,\prod_{\nu\in f(\sigma(A))}(t-\nu)^{k(\nu)},
\]
where, $k(\nu)=\max\left\{\intf{[}{]}{m(\lambda)-1}
{\mathsf{ord}_{\lambda}{f'}+1}+1:\,\,\lambda \in \sigma(A)\cap f^{-1}\{\nu\}\right\}.
$
\end{theorem}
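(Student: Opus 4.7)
The plan is to reduce the computation to a single eigenvalue at a time, via the holomorphic functional calculus of Section~\ref{S:holo_fc}, and then to invoke Lemma~\ref{L:minmo_lincomb_nilpo} on each Jordan block of $A-\lambda I$. The Spectral Mapping Theorem stated in Section~\ref{S:holo_fc} gives $\sigma(f(A))=f(\sigma(A))$, so only the exponents $k(\nu)$ need to be identified. Write $\mathfrak{X}=\C^n$ and, as in Remark~\ref{Rm:orthdecoX}, decompose
\[
\mathfrak{X}\,=\,\bigoplus_{\lambda\in\sigma(A)}\mathfrak{X}_\lambda,\qquad \mathfrak{X}_\lambda:=E(\lambda)\mathfrak{X},
\]
into $A$-invariant (hence $f(A)$-invariant) subspaces. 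Recall from Remark~\ref{Rm:nilpotency} that $N_\lambda:=(A-\lambda I)|_{\mathfrak{X}_\lambda}$ is nilpotent of index exactly $m(\lambda)$. Since the minimal polynomial of $f(A)$ is the least common multiple of the minimal polynomials of $f(A)|_{\mathfrak{X}_\lambda}$, the problem splits across $\lambda\in\sigma(A)$.

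On each $\mathfrak{X}_\lambda$, Result~\ref{Res:fomlfuncal} together with $E(\mu)E(\lambda)=0$ for $\mu\neq\lambda$ gives the explicit representation
\[
f(A)|_{\mathfrak{X}_\lambda}\,=\,f(\lambda)\,I\,+\,\sum_{j=1}^{m(\lambda)-1}\,\frac{f^{(j)}(\lambda)}{j!}\,N_\lambda^{\,j}.
\]
Now refine the decomposition further by Jordan blocks of $N_\lambda$: on a block of size $d\leq m(\lambda)$, $N_\lambda$ restricts to the standard nilpotent of degree $d$, and Lemma~\ref{L:minmo_lincomb_nilpo} (with $\alpha_0=f(\lambda)$ and $\alpha_j=f^{(j)}(\lambda)/j!$) shows that the restriction of $f(A)|_{\mathfrak{X}_\lambda}-f(\lambda)I$ to this block has minimal polynomial
\[
\bigl(t-f(\lambda)\bigr)^{[(d-1)/l_d]+1},\qquad l_d:=l(\alpha_1,\dots,\alpha_{d-1}).
\]
A quick case-check shows the exponent $[(d-1)/l_d]+1$ is non-decreasing in $d$, so the largest block (of size $m(\lambda)$) dominates; moreover, for $d=m(\lambda)$ one has $l_d=\mathsf{ord}_\lambda f'+1$ as long as $\mathsf{ord}_\lambda f'+1\leq m(\lambda)-1$, and otherwise both sides yield exponent $1$. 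This identifies the $\lambda$-contribution to $\minpo{f(A)}$ as $(t-f(\lambda))^{[(m(\lambda)-1)/(\mathsf{ord}_\lambda f'+1)]+1}$.

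Finally, group the eigenvalues by their images under $f$: for each $\nu\in f(\sigma(A))$, the LCM of the contributions from the $\lambda\in\sigma(A)\cap f^{-1}\{\nu\}$ is $(t-\nu)^{k(\nu)}$ with $k(\nu)$ as in the statement, and different $\nu$'s give coprime factors. Multiplying yields the claimed formula for $\minpo{f(A)}$. The step I expect to require the most care is the Jordan-block analysis on a single $\mathfrak{X}_\lambda$: namely, verifying the monotonicity of $[(d-1)/l_d]+1$ in $d$ (which forces only the largest block to matter), and handling correctly the edge case where all $\alpha_j$ with $j\leq d-1$ vanish, so that the convention $l_d=d$ applies and the exponent collapses to $1$.
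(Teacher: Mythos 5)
Your proof is correct and takes essentially the same route as the paper's: decompose $\C^n$ by the spectral projections $E(\lambda)$, use Result~\ref{Res:fomlfuncal} to write $f(A)|_{\mathfrak{X}_\lambda}=f(\lambda)\,\I+\sum_{j\geq 1}\frac{f^{(j)}(\lambda)}{j!}N_\lambda^{\,j}$, apply Lemma~\ref{L:minmo_lincomb_nilpo} to identify the exponent $\big[\tfrac{m(\lambda)-1}{\mathsf{ord}_{\lambda}f'+1}\big]+1$, and assemble $\minpo{f(A)}$ as a product of coprime LCMs grouped by the fibres of $f$. The only deviation is your further refinement into Jordan blocks together with the monotonicity check in the block size $d$, where the paper applies Lemma~\ref{L:minmo_lincomb_nilpo} directly to the nilpotent $(A-\lambda\I)|_{\mathfrak{X}_\lambda}$ of index $m(\lambda)$; this extra step is harmless and in fact supplies the small justification that the exponent depends only on the nilpotency index.
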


\begin{proof}
Let $\lam\in\sigma(A)$ and let $E(\lam)$ be the projection operator as defined in Section~\ref{S:holo_fc}
with $T=A$.
Then by Remark~\ref{Rm:orthdecoX} we know that:
\begin{equation}\label{E:orthdecoX}
 \mathfrak{X}=\oplus_{\lam\in\sigma(A)}\mathfrak{X}_\lam, \ \ \ \text{where $\mathfrak{X}_\lam=E(\lam)
 \mathfrak{X}, \ \lam\in\sigma(A)$.}
\end{equation}
Note that $\mathfrak{X}=\C^n$, but it really does not matter here.
Let $x\in\mathfrak{X}$ and write $x=\oplus_{\lam\in\sigma(A)}x_{\lam}$, where $x_\lam=E(\lam)x$. Then
by Result~\ref{Res:fomlfuncal} we have:
\begin{align*}
 f(A)x&=\sum_{\lam\in\sigma(A)}\Bigg(\sum_{j=0}^{m(\lam)-1}
 \intf{}{}{(A-\lam\I)^j}{j!}\,f^{(j)}(\lam)E(\lam)\Bigg)\,x \\ 
 &=\sum_{\lam\in\sigma(A)}\Bigg(\sum_{j=0}^{m(\lam)-1}
 \intf{}{}{(A-\lam\I)^j}{j!}\,f^{(j)}(\lam)\Bigg)\,x_\lam .
\end{align*}
Now consider the operators $A_\lam : \mathfrak{X}_\lam\lrarw\mathfrak{X}$ 
defined by:
\[
  A_\lam:=\sum_{j=0}^{m(\lam)-1}\intf{}{}{(A-\lam\I)^j}{j!}\,f^{(j)}(\lam).
\]
Since $A\mathfrak{X}_\lam\subseteq\mathfrak{X}_\lam$ for all $\lam\in\sigma(A)$, $A_\lam$ leaves
$\mathfrak{X}_\lam$ invariant; i.e. $A_\lam(\mathfrak{X}_\lam)\subseteq\mathfrak{X}_\lam$. Hence
\vspace{0.1cm}
\begin{equation}\label{E:f(A)deco}
  f(A)\,x=\oplus_{\lam\in\sigma(A)}A_{\lam}\,x_\lam, \ \text{where $x=\oplus_{\lam\in\sigma(A)}\,x_{\lam}$
  is as in \eqref{E:orthdecoX}.}\vspace{0.1cm}
\end{equation}
This establishes that $f(A)=\oplus_{\lam\in\sigma(A)}\,A_\lam$.
\smallskip

Focussing our attention to $A_\lam$, we first observe that\,--\,by Remark~\ref{Rm:nilpotency}\,--\,the operator
$A-\lam\I$, $\lam\in\sigma(A)$, restricted to $\mathfrak{X}_\lam$ is a nilpotent 
operator of degree $m(\lam)-1$.
Hence by Lemma~\ref{L:minmo_lincomb_nilpo}, we have $\minpo{A_\lam}(t):=(t-f(\lam))^{\nu}$,
where\vspace{0.1cm}
\[
\nu\,=\,\intf[]{m(\lam)-1}{l(f'(\lambda),f''(\lambda),\dots,f^{(m(\lam)-1)}(\lambda))}+1.\vspace{0.1cm}
\]
If $\mathsf{ord}_{\lambda}{f'}\leq m(\lam)-2$,
then $l(f'(\lambda),f''(\lambda),\dots,f^{(m(\lam)-1)}(\lambda))\,=\,\mathsf{ord}_{\lambda}{f'}+1$,
else $\mathsf{ord}_{\lambda}{f'}+1>(m(\lam)-1)$ and $l(f'(\lambda),f''(\lambda),\dots,f^{(m(\lam)-1)}(\lambda))>(m(\lam)-1)$.
In both cases we have:\vspace{0.1cm}
\[
\intf[]{m(\lam)-1}{l(f'(\lambda),f''(\lambda),\dots,f^{(m(\lam)-1)}(\lambda))}\,=\,
\intf[]{m(\lam)-1}{\mathsf{ord}_{\lambda}{f'}+1}.\vspace{0.1cm}
\]
From the last two expressions $\minpo{A_\lam}(t)
=(t-f(\lam))^{\intf[]{m(\lam)-1}{\mathsf{ord}_{\lambda}{f'}+1}}.$
\smallskip

Let us rewrite \eqref{E:f(A)deco} in another way:
\[
f(A)=\oplus_{\nu\in f(\sigma(A))}\,B_\nu, \ \ \ \text{where} \ B_\nu=\oplus_{f(\lam)=\nu}\,A_\lam
\]
The minimal polynomial for $f(A)$, $\minpo{f(A)}$, is the least common multiple of $\minpo{B_\nu}$,
$\nu\in f(\sigma(A))$. Notice that the polynomials $\minpo{B_\nu}$, $\nu\in f(\sigma(A))$ are relatively
prime to each other. Hence
\[
\minpo{f(A)}(t)=\prod_{\nu\in f(\sigma(A))}\,\minpo{B_\nu}(t)
\]
Now the minimal polynomial for $B_\nu$ is the least common multiple of minimal polynomials of 
$A_\lam$ such that $f(\lam)=\nu$, $\lam\in\sigma(A)$. It is easy to see (using the expression for
$\minpo{A_\lam}$) that
\[
\minpo{B_\nu}(t)=(t-\nu)^{k(\nu)}, \ \text{where $k(\nu)=
\max\Bigg\{\intf[]{m(\lam)-1}{\mathsf{ord}_{\lambda}{f'}+1}\,:\,\lam\in f^{-1}\{\nu\}\cap\sigma(A)\Bigg\}$}
\]
From the last two expressions, we get the desired result.
\end{proof}
\begin{remark}
Theorem~\ref{T:minpo_holo_func_anal} is a generalization of Theorem~3.4 in \cite{chandel:3spintp20} 
which dealt with the 
case $\Omega=\D$. The proof of Theorem~3.4 as presented in \cite{chandel:3spintp20} is
very specific to the unit disc.
This is because the proof of Lemma~3.2\,--\,which is a crucial tool in establishing Theorem~3.4 in 
\cite{chandel:3spintp20}\,--\,exploits a property of a holomorphic function in the unit disc; namely:
every holomorphic
function in $\D$ has a power series representation on $\D$.
This latter property is very specific to the unit disc.
\end{remark}

\section{The proof of Theorem~\ref{T:3pt_nec}}\label{S:3pint}
In this section, we shall present our proof of Theorem~\ref{T:3pt_nec}. After the proof, we shall also present an 
observation that, among other things, shows that when $\Omega=\D$, Theorem~1.4 in \cite{chandel:3spintp20} is the same as our theorem, in particular, \cite[Theorem~1.4]{chandel:3spintp20} is a special case of 
Theorem~\ref{T:3pt_nec}.
\smallskip

Before we present our proof, we wish to restate the inequality \eqref{E:2SchwarzIneq} when
$\Omega=\D$, so that it is easy to apply in our proof of Theorem~\ref{T:3pt_nec}. First of all, as mentioned before, in this case $C_{\Omega}(z_1,\,z_2)=\mobi(z_1,\,z_2)$.
Now given $W_j\in\Omega_n$, $j=1,2$, consider:
\[
b_j(t)=\prod_{\lam\in\sigma(W_j)}{\intf{(}{)}{t-\lam}{1-\overline{\lam}t}^{m(j,\,\lam)}}.
\]
Here $m(j,\,\lam)$ denotes the multiplicity of $\lam$ as a zero of the minimal polynomial for 
$W_j$. The finite Blaschke product $b_j$ is called the {\em minimal Blaschke product} corresponding to $W_j$,
$j=1,2$. With this notation in hand, we can restate the inequality \eqref{E:2SchwarzIneq} as:\vspace{0.1cm}
\begin{equation}\label{E:alttwopointT}
\max\left\{\max_{\mu\in\sigma(W_2)}|\,b_1(\mu)\,|, 
\ \max_{\lambda\in\sigma(W_1)}|\,b_2(\lam)\,|\right\} \ 
\leq \ \hyper{\zt_1}{\zt_2}.\vspace{0.1cm}
\end{equation}
It is in this form that we shall use the inequality~\eqref{E:2SchwarzIneq} in our proof when $\Omega=\D$.
We now present our proof of Theorem~\ref{T:3pt_nec}.

\subsection{The proof of Theorem~\ref{T:3pt_nec}}\label{SS:3pt_nec_proof}

Let $F\in\hol(\D,\,\Sn)$ be such that $F(\zt_j)=W_j$, $j\in\{1,2,3\}$. Let 
$k\in\{1,2,3\}$ and let us denote by $B_{k,\,z}(\bcdot)$ the function $B(W_k,\,z;\,\bcdot)$,
where $z\in\Omega\setminus\sigma(W_k)$ be any arbitrary but fixed point in $\Omega$.
Now consider the function:
\[
\widetilde{F}_{k,\,z}:=B_{k,\,z}\circ F\circ\psi_k^{-1}.
\]
Here $\psi_k(\zt)={(\zt-\zt_k)}/{(1-\bar{\zt_k}\zt)}$ is an automorphism of the unit disc that maps 
$\zt_k$ to $0$. Then $\widetilde{F}_{k,\,z}\in\hol(\D,\,\OM_n)$ such that 
$\widetilde{F}_{k,\,z}(\psi_k(\zt_{L(k)}))=B_{k,\,z}(W_{L(k)}),\,\widetilde{F}_{k,\,z}(\psi_k(\zt_{G(k)}))
=B_{k,\,z}(W_{G(k)})$ and $\widetilde{F}_{k,\,z}(0)=0$. By Lemma~\ref{L:fl2}, we get
\begin{equation}\label{E:fact_aux_intp}
\widetilde{F}_{k,\,z}(\zt)=\zt\,\widetilde{G}_{k,\,z}(\zt) \ \forall\zt\in\D,\,\,\text{for some 
$\widetilde{G}_{k,\,z}\in\hol(\D,\,\overline{\Omega}_n)$}.
\end{equation}
Two cases arise:
\smallskip

\noindent{\bf Case 1.} $\widetilde{G}_{k,\,z}(\D)\subset\Omega_n$.

\noindent In view of \eqref{E:fact_aux_intp}, we have \vspace{0.1cm}
\begin{equation}\label{E:2pt_redut_intp}
\widetilde{G}_{k,\,z}\left(\psi_k(\zt_{L(k)})\right)\,=\,W_{L(k),\,k,\,z}\,\,\text{and}\,\,
\widetilde{G}_{k,\,z}\left(\psi_k(\zt_{G(k)})\right)\,=\,W_{G(k),\,k,\,z}\vspace{0.1cm}
\end{equation}
where $W_{L(k),\,k,\,z}:={B_{k,\,z}(W_{L(k)})}\big{/}{\psi_{k}(\zt_{L(k)})}$ and
$W_{G(k),\,k,\,z}:={B_{k,\,z}(W_{G(k)})}\big{/}{\psi_{k}(\zt_{G(k)})}$.
Now using the inequality \eqref{E:alttwopointT}, a necessary condition for \eqref{E:2pt_redut_intp} is
\begin{equation}\label{E:ineq_2pt_redut}
\max\left\{\sub{\eta\in\sigma\left(W_{L(k),\,k,\,z}\right)}{\max}|\,b_{G(k),\,k,\,z}(\eta)\,|,\,
\sub{\eta\in\sigma\left(W_{G(k),\,k,\,z}\right)}{\max}|\,b_{L(k),\,k,\,z}(\eta)\,|\right\}
\leq\mathcal{M}_{\D}\left(\zt_{G(k)},\,\zt_{L(k)}\right),\vspace{0.1cm}
\end{equation}
where $b_{L(k),\,k,\,z}$ and $b_{G(k),\,k,\,z}$ denote the minimal Blaschke product corresponding to the 
matrices $W_{L(k),\,k,\,z}\,,\,\,W_{G(k),\,k,\,z}$. Given the definitions of the latter matrices, we will need 
Theorem~\ref{T:minpo_holo_func_anal} to determine  $b_{L(k),\,k,\,z}$, $b_{G(k),\,k,\,z}$. By this theorem, we have
\begin{align}
b_{L(k),\,k,\,z}(t)&=\prod_{\nu\in\sigma\left(B_{k,\,z}(W_{L(k)})\right)}
\!\!\!{\intf{(}{)}{t-{\nu}/{\psi_k(\zt_{L(k)})}}{1-\overline{{\nu}/{\psi_k(\zt_{L(k)})}}t}}^{q_z(\nu,\,L(k),\,k)}\,\,
\label{E:minpo_2pt_redut1}\\ 
b_{G(k),\,k,\,z}(t)&=\prod_{\nu\in\sigma\left(B_{k,\,z}(W_{G(k)})\right)}
\!\!\!{\intf{(}{)}{t-{\nu}/{\psi_k(\zt_{G(k)})}}{1-\overline{{\nu}/{\psi_k(\zt_{G(k)})}}t}}^{q_z(\nu,\,G(k),\,k)},\label{E:minpo_2pt_redut2}
\end{align}
where $q_z(\nu,\,L(k),\,k)$ and $q_z(\nu,\,G(k),\,k)$ are as in the statement of Theorem~\ref{T:3pt_nec}.
Now if $\eta\in\sigma(W_{L(k),\,k,\,z})$ or $\eta\in\sigma(W_{G(k),\,k,\,z})$,
then $\eta=\mu/{\psi_k(\zt_{L(k)})}$ for some $\mu\in\sigma(B_{k,\,z}(W_{L(k)}))$ or
$\eta=\mu/{\psi_k(\zt_{G(k)})}$ for some $\mu\in\sigma(B_{k,\,z}(W_{G(k)}))$, respectively, and conversely.
This observation together with \eqref{E:minpo_2pt_redut2}, \eqref{E:minpo_2pt_redut1} and 
\eqref{E:ineq_2pt_redut} establishes the first part of our theorem.
\smallskip

\noindent{\bf Case 2.} $\widetilde{G}_{k,\,z}(\D)\cap\partial\,\Omega_n\not=\emptyset$.

\noindent Let $\zt_0\in\D$ be such that $e^{i\theta_z}\in\sigma(\widetilde{G}_{k,\,z}(\zt_0))$
for some $\theta_z\in\mathbb{R}$. By Lemma~\ref{L:fl1}, we have
$e^{i\theta_z}\in\sigma(\widetilde{G}_{k,\,z}(\zt))$ for every $\zt\in\D$. By \eqref{E:fact_aux_intp},
$e^{i\theta_z}\zt\in\sigma(\widetilde{F}_{k,\,z}(\zt))$. Let $\Phi\,\equiv\,F\circ\psi_k^{-1}$. Then
$\Phi\in\hol\big(\D,\,\Sn\big)$ and we have:
\[
e^{i\theta_z}\zt\in\sigma\big(B_{k,\,z}\circ\Phi(\zt)\big)=B_{k,\,z}\{\sigma(\Phi(\zt))\} \ \forall\zt\in\D,
\]
where the last equality is an application of the Spectral Mapping Theorem.
For each $\zt\in\D$, let $\omega_{\zt}\in\sigma(\Phi(\zt))$ be such that
$B_{k,\,z}(\omega_{\zt})\,=\,e^{i\theta_z}\zt$. Notice that if $\zt_1\not=\zt_2$ then
$\omega_{\zt_1}\not=\omega_{\zt_2}$, whence $E\,:=\,\{\omega_{\zt}\,:\,\zt\in\D\}$ is
an uncountable set in $\Omega$. Notice that $\omega_{\zt}$ satisfies:
\[
B_{k,\,z}(\omega_\zt)=e^{i\theta_z}\zt\,\,\,\text{and}\,\,\,
\mathrm{det}\left(\omega_{\zt}\mathbb{I}-\Phi(\zt)\right)
=0 \ \forall\zt\in\D.
\]
As $E$ is uncountable, it follows from the identity principle that the holomorphic map\linebreak
$x\longmapsto\mathrm{det}\left(x\mathbb{I}-\Phi(e^{-i\theta_z}B_{k,\,z}(x))\right)$ is identically $0$
on $\Omega$.
As $B_{k,\,z}$ maps $\Omega$ into $\D$, it follows that
\begin{equation}\label{E:blasholcor}
B^{-1}_{k,\,z}\{e^{i\theta_z}\zt\}\subset\sigma(\Phi(\zt))=\sigma\left(F\circ\psi_k^{-1}(\zt)\right) \ \forall\zt\in\D.
\end{equation}
Putting $\zt=\psi_k(\zt_{L(k)})$ and $\psi_k(\zt_{G(k)})$ respectively in \eqref{E:blasholcor}
we get 
$B^{-1}_{k,\,z}\{e^{i\theta_z}\psi_k(\zt_{L(k)})\}\subset\sigma(F(\zt_{L(k)}))
=\sigma(W_{L(k)})$ and
$B^{-1}_{k,\,z}\{e^{i\theta_z}\psi_k(\zt_{G(k)})\}\subset\sigma(F(\zt_{G(k)}))=\sigma(W_{G(k)})$.\qed
\smallskip

\begin{obs}\label{Obs:coromaintD}
When $\Omega=\D$, it is not difficult to show that 
\[
 G_{\D}(\lam,\,z;\,\zt):=V(\lam, z)\,\intf{}{}{\zt-\lam}{1-\bar{\lam}\zt}
\]
where $V(\lam, z)\in\mathbb{T}$ such that $V(\lam, z)\big((z-\lam)/(1-\bar{\lam}z)\big)=
\mathcal{M}_{\D}(\lam,\,z)$. Hence when $\Omega=\D$, and $A\in\Omega_n$ then 
$B(A,\,z;\,\bcdot)$ is of the form
\[
B(A,\,z;\,\bcdot):=R\big(\sigma(A),\,z\big)\prod_{\lam\in\sigma(A)}{
\intf(){\zt-\lam}{1-\bar{\lam}\zt}}^{m(\lam)}.
\]
Here, $R\big(\sigma(A),\,z\big)$ is a uni-modular constant. In particular, $B(A,\,z;\,\bcdot)$ at any point
$z$ is a scalar multiple of the minimal Blaschke product corresponding to $A$ by a uni-modular constant depending on $z$.
\smallskip

In this case, given $W_i\in\Omega_n$, $i\in\{1,2,3\}$ as in the statement of Theorem~\ref{T:3pt_nec}, we notice that:
\[
 \mathsf{ord}_{\lambda}{B'_{k,\,z}}=\mathsf{ord}_{\lam}{B'_{k}},
 \]
 where $B_k(\bcdot)=B_{W_k}(\bcdot)$ is the minimal Blaschke product corresponding to $W_k$.
 Hence the numbers $q_z(\nu,j,k)$ does not depend on $z$. Also, since $\mathcal{M}_{\D}(z_1,\,z_2)
 =\mathcal{M}_{\D}(e^{i\theta}z_1,\,e^{i\theta}z_2)$, we have:
 \begin{align*}
&\sub{\mu\in\sigma\left(B_{k,\,z}(W_{L(k)})\right)}{\max}
\prod_{\nu\in\sigma\left(B_{k,\,z}(W_{G(k)})\right)}
{\mathcal{M}_{\D}\left(\intf{}{}{\mu}{\psi_k(\zt_{L(k)})},\,\intf{}{}{\nu}{\psi_k(\zt_{G(k)})}
\right)}^{q(\nu,\,G(k),\,k,\,z)}\\=&
\sub{\mu\in\sigma\left(B_k(W_{L(k)})\right)}{\max}
\prod_{\nu\in\sigma\left(B_k(W_{G(k)})\right)}
{\mathcal{M}_{\D}\left(\intf{}{}{\mu}{\psi_k(\zt_{L(k)})},\,\intf{}{}{\nu}{\psi_k(\zt_{G(k)})}
\right)}^{q(\nu,\,G(k),\,k)}.
\end{align*}
The above equality is also true when we replace $G(k)$ by $L(k)$ and vice-versa. It follows from this 
that when $\Omega=\D$, the statement of Theorem~\ref{T:3pt_nec} reduces to that of Theorem~1.4 in
\cite{chandel:3spintp20}.
\end{obs}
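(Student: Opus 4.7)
The plan is to unpack Observation~\ref{Obs:coromaintD} by first computing the extremal solution $G_{\D}(\lam,z;\bcdot)$ explicitly, then propagating that formula through the definition of $B(A,z;\bcdot)$, and finally invoking rotational invariance of $\mathcal{M}_{\D}$ to strip away the $z$-dependence in the statement of Theorem~\ref{T:3pt_nec}.

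First, I would establish the displayed formula for $G_{\D}(\lam,z;\bcdot)$. The Schwarz--Pick lemma tells us that any $f\in\hol(\D,\D)$ with $f(\lam)=0$ satisfies $|f(z)|\leq\mathcal{M}_{\D}(\lam,z)$, with equality forcing $f$ to be a M\"obius automorphism of $\D$ vanishing at $\lam$. By the uniqueness of the Carath\'{e}odory extremal for hyperbolic domains recalled in the paragraph preceding Definition~\ref{Def:genminblashprod}, the extremal must be of the form $\zt\longmapsto V(\lam,z)\,(\zt-\lam)/(1-\bar{\lam}\zt)$, where $V(\lam,z)\in\mathbb{T}$ is the unique unimodular constant chosen so that the value at $\zt=z$ equals the positive real number $\mathcal{M}_{\D}(\lam,z)$. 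Plugging this into \eqref{E:genminblashprod} and gathering the unimodular factors gives
\[
B(A,z;\bcdot)=R(\sigma(A),z)\prod_{\lam\in\sigma(A)}\intf(){{\zt-\lam}}{1-\bar{\lam}\zt}^{m(\lam)},\qquad R(\sigma(A),z):=\prod_{\lam\in\sigma(A)}V(\lam,z)^{m(\lam)}\in\mathbb{T},
\]
which is the second displayed identity in the observation.

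Next, I would use the factorisation $B_{k,z}(\bcdot)=R(\sigma(W_k),z)\,B_k(\bcdot)$, where $B_k$ is the minimal Blaschke product of $W_k$. Since a nonzero scalar multiplier does not change the order of vanishing of a derivative, $\mathsf{ord}_{\lam}B'_{k,z}=\mathsf{ord}_{\lam}B'_{k}$, and therefore the quantity $q_z(\nu,j,k)$ defined in Theorem~\ref{T:3pt_nec} is in fact independent of $z$ and equals the number $q(\nu,j,k)$ appearing in \cite{chandel:3spintp20}. This also shows that $\sigma(B_{k,z}(W_j))=R(\sigma(W_k),z)\,\sigma(B_k(W_j))$ as a rotation, so every $\nu\in\sigma(B_{k,z}(W_{G(k)}))$ corresponds to a unique $\nu_0=\nu/R(\sigma(W_k),z)\in\sigma(B_k(W_{G(k)}))$, and similarly for $\mu$ and for $L(k)$.

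Finally, the rotational invariance $\mathcal{M}_{\D}(e^{i\theta}w_1,e^{i\theta}w_2)=\mathcal{M}_{\D}(w_1,w_2)$ lets me pull the common unimodular factor $R(\sigma(W_k),z)$ out of both arguments of every $\mathcal{M}_{\D}$ factor occurring in the first alternative of Theorem~\ref{T:3pt_nec}; the product then collapses to exactly the expression appearing in \cite[Theorem~1.4]{chandel:3spintp20}, and the $z$-dependence disappears, making the "for each $z\in\Omega\setminus\sigma(W_k)$" quantifier vacuous. For the second alternative, I would absorb $R(\sigma(W_k),z)$ into the free phase $e^{i\theta_z}$, observing that $B^{-1}_{k,z}\{e^{i\theta_z}\psi_k(\zt_\ast)\}=B_k^{-1}\{e^{i\theta}\psi_k(\zt_\ast)\}$ for $e^{i\theta}:=e^{i\theta_z}\,\overline{R(\sigma(W_k),z)}$, which is exactly the boundary-contact clause in \cite[Theorem~1.4]{chandel:3spintp20}. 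The main obstacle here is purely bookkeeping: verifying that the unimodular rotations are consistent across the $G(k)$ and $L(k)$ terms and that the two theorems' statements match term by term; nothing delicate is expected.
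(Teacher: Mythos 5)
Your proposal is correct and follows essentially the same route as the paper: identify the extremal $G_{\D}(\lam,z;\bcdot)$ as a unimodular multiple of the M\"obius factor via Schwarz--Pick and the uniqueness of the Carath\'eodory extremal, factor $B_{k,\,z}=R(\sigma(W_k),z)\,B_k$, note that a unimodular scalar affects neither $\mathsf{ord}_{\lam}B'_{k,\,z}$ nor (after the spectral mapping theorem rotates $\sigma(B_{k,\,z}(W_j))$ by $R$) the $\mathcal{M}_{\D}$-expressions, by rotational invariance. Your explicit treatment of the second (boundary-contact) alternative, absorbing $R$ into the free phase $e^{i\theta_z}$, is a detail the paper leaves implicit but is entirely in the same spirit.
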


\begin{remark}\label{Rm:depenofmaint3pointZ}
It would be interesting to investigate for what domains $\Omega$, the statement of
Theorem~\ref{T:3pt_nec} does not depend on the choice of point $z\in\Omega$. As we have observed in
Observation~\ref{Obs:coromaintD} that this is the case when $\Omega=\D$. This question is, of course, 
related to the behaviour of Carath\'{e}odory extremal $G_{\Omega}(\lam,\,z;\,\bcdot)$ as $z$ varies over 
$\Omega$ and $\lam$ is any fixed point in $\Omega$.
\end{remark}

\section*{Acknowledgements} 
The author wishes to thank the anonymous referee for pointing out suggestions and problems in improving the presentation of the 
present version of the article. 
Most of the work done in this article was carried out while the author was an institute postdoctoral fellow
at Indian Institute of Technology Bombay. He is extremely grateful to the institute for all the support that he got during this period.
The author also wishes to thank Prof. Gautam Bharali, in particular, e-mail conversations with him helped the author to find an erroneous remark
in an earlier draft of this article and also discussions with him led to a better presentation of the first half of this article.

\end{document}